\numberwithin{equation}{section}
\numberwithin{figure}{section}
\theoremstyle{plain}
\newtheorem{thm}{\protect\theoremname}
  \theoremstyle{plain}
  \newtheorem{lem}[thm]{\protect\lemmaname}
  \theoremstyle{plain}
  \newtheorem{cor}[thm]{\protect\corollaryname}
  \theoremstyle{plain}
  \newtheorem{prop}[thm]{\protect\propositionname}
  \theoremstyle{remark}
  \newtheorem{rem}[thm]{\protect\remarkname}
  \theoremstyle{remark}
  \newtheorem{example}[thm]{\protect\examplename}
\DeclareFontFamily{OT1}{pzc}{}
\DeclareFontShape{OT1}{pzc}{m}{it}%
             {<-> s * [1.195] pzcmi7t}{}
\DeclareMathAlphabet{\mathscr}{OT1}{pzc}%
                                 {m}{it}
\newcommand{\Spec}{\operatorname{Spec}}
\newcommand{\OO}{{\mathcal O}}
\newcommand{\Z}{{\mathbb Z}}
\newcommand{\N}{{\mathbb N}}
\newcommand{\A}{{\mathbb A}}
\newcommand{\aone}{{\mathbb A}^1}
\newcommand{\pone}{{\mathbb P}^1}
\newcommand{\KMW}{\mathbf{K}^{\mathrm{MW}}}
  \providecommand{\corollaryname}{Corollary}
  \providecommand{\lemmaname}{Lemma}
  \providecommand{\propositionname}{Proposition}
  \providecommand{\remarkname}{Remark}
\providecommand{\theoremname}{Theorem}
\providecommand{\examplename}{Example}
\begin{document}
\setlength{\headheight}{1cm}
\author{Adrien Dubouloz \and Jean Fasel}
\address{Adrien Dubouloz \\ IMB - UMR 5584, CNRS, Univ. Bourgogne Franche-Comté, F-21000 Dijon, France} 
\address{Jean Fasel \\ Institut Fourier - UMR 5582 \\ Universit\'e Grenoble Alpes \\ 100 rue des math\'ematiques\\ F-38000 Grenoble }
\keywords{Koras-Russell threefolds; contractibility} 
\thanks{The first author was partially supported by ANR Grant \textquotedbl{}BirPol\textquotedbl{}
ANR-11-JS01-004-01.}
\title{{\bf Families of $\mathbb{A}^1$-contractible affine threefolds}} 
\date{} 
\maketitle

\begin{abstract} We provide families of affine threefolds which are contractible in the unstable $\mathbb A^1$-homotopy category of Morel-Voevodsky and pairwise non-isomorphic, thus answering Conjecture 1.4 in \cite{Asok07}. As a particular case, we show that the Koras-Russell threefolds of the first kind are contractible, extending results of \cite{Hoyois15}.
\end{abstract}

\section*{Introduction}
Let $k$ be a field and let $X$ be a smooth affine scheme of dimension $d$ over $k$. The Zariski cancellation problem asks if the existence of an isomorphism $X\times \A^1_k\simeq \A^{d+1}_k$ implies that $X$ is isomorphic to $\A^d_k$. This question is known to have a negative answer when $k$ has positive characteristic by work of T. Asanuma and N. Gupta (\cite{As87,Gupta14}) but is still open in characteristic zero for any  $d\geq 3$. Among possible candidate counter-examples are the so-called Koras-Russell threefolds of the first kind $X(m,r,s)$, which are the closed subvarieties of $\A^4_k$ defined by the equations $x^mz=y^r+t^s+x$ where  $m\geq 2$ and $r,s\geq 2$ are coprime integers. For more details on these threefolds, we refer to the nice introduction of \cite{Hoyois15}. All these threefolds admit algebraic actions of the  additive group $\mathbb{G}_{a,k}$ and they were originally proven to be not isomorphic to the affine space $\A^3_k$ by means of invariants associated to those actions \cite{KML97}. But one might expect that they are stably isomorphic to $\A^3_k$, i.e. that there exists an isomorphism $X(m,r,s)\times \A^1_k\simeq \A^3_k\times \A^1_k$. If such an isomorphism exists, then $X(m,r,s)$ is contractible in the (unstable) $\A^1$-homotopy category $\mathcal H(k)$ of Morel and Voevodsky (\cite{Morel99}) and it follows that the contractibility of $X$ is an obstruction to the existence of an isomorphism as above.

In a recent paper, Hoyois, Krishna and \O stv\ae r proved that the Koras-Russell threefolds are stably contractible, i.e. that there exists $n\geq 0$ such that $(\pone)^{\wedge n}\wedge X(m,r,s)$ is contractible. Basically, that means that these threefolds have no non-trivial cohomology for any cohomology theory which is representable in the stable category of $\pone$-spectra. Let us note however that examples of non isomorphic spaces in $\mathcal H(k)$ that become isomorphic after a single smash product with $\pone$ are abundant in nature. We refer the interested reader to \cite[Proposition 5.22]{AsokMorel} or to \cite[Theorem 4.2]{Wickelgren15} for such examples. The first theorem of the present paper shows that the Koras-Russell threefolds are indeed contractible.

\begin{thm}\label{thm:KRcontractible}
The Koras-Russell threefolds $X(m,r,s)=\{x^{m}z=y^{r}+t^{s}+x\}$ are $\mathbb{A}^{1}$-contractible. 
\end{thm}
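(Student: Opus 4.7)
\emph{Proof proposal.} My plan is to realize $X(m,r,s)$ as an iterated affine modification of $\mathbb{A}^{3}$ and to invoke at each step an $\mathbb{A}^{1}$-contractibility preservation criterion for such modifications; I expect this criterion to be the principal technical tool established in the paper prior to this theorem.

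For $m \geq 2$, the morphism $\pi_m \colon X(m,r,s) \to X(m-1, r, s)$, $(x,y,z,t) \mapsto (x, y, xz, t)$, realizes $X(m,r,s)$ as the affine modification of $X(m-1,r,s)$ with divisor $(x)$ and center the ideal $I = (x, z')$, where $z'$ denotes the third coordinate of $X(m-1,r,s)$: indeed the ring-theoretic calculation $k[X(m-1,r,s)][z'/x] = k[X(m,r,s)]$ is immediate from $x^{m-1}(xz) = x^m z = y^r + t^s + x$. The case $m=1$ is handled analogously, $X(1,r,s)$ being the affine modification of $\mathbb{A}^{3}_{x,y,t}$ with divisor $(x)$ and center the ideal $(x, y^r + t^s + x) = (x, y^r + t^s)$. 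In every step the closed subvariety cut out by the center is the plane cuspidal curve $C = V(y^r + t^s) \subset \mathbb{A}^{2}$, and the divisor is $C \times \mathbb{A}^{1}$.

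The key geometric input is that $C$ is $\mathbb{A}^{1}$-contractible. Since $\gcd(r,s) = 1$, the curve $C$ admits a normalization $\mathbb{A}^{1} \to C$ given by a parametrization of the form $u \mapsto (u^s, cu^r)$ for a suitable constant $c$; this map is a finite universal homeomorphism and, in the setting of interest, is an $\mathbb{A}^{1}$-weak equivalence. Thus $C$, and consequently $C \times \mathbb{A}^{1}$, are $\mathbb{A}^{1}$-contractible at every step of the iteration, while the base $X(0,r,s) = \mathbb{A}^{3}$ is $\mathbb{A}^{1}$-contractible. An induction on $m$ then yields the result, provided the preservation criterion applies at each step.

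The hard part, and the reason an auxiliary \emph{family} framework is needed, is the singularity of $C$ at the cusp: classical Morel-Voevodsky homotopy purity cannot be applied directly to compute the cofiber of the open inclusion $\{x \neq 0\} \hookrightarrow X(m,r,s)$ as a Thom space of a normal bundle on $V(x)$, because $V(x) \subset X(m-1,r,s)$ is not smooth. My expectation is that the paper circumvents this by exhibiting $X(m,r,s)$ as a specialization within an algebraic family of affine threefolds whose general member has a smooth center (the normalization of $C$), together with a rigidity-in-families statement propagating $\mathbb{A}^{1}$-contractibility from the smooth members to the cuspidal specialization $X(m,r,s)$. Once such a criterion is in place, the induction closes and the theorem follows.
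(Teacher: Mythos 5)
Your proposal does not close; it reduces the theorem to a ``contractibility is preserved under affine modification'' criterion that is neither proved in this paper nor available in the literature in the generality you need, and you correctly identify but do not resolve the obstruction. The centers of your modifications are the non-reduced-normal-crossing data $(x,\, y^r+t^s)$ whose vanishing locus is the cylinder over the cuspidal curve $C=V(y^r+t^s)$: since $C$ is singular, homotopy purity does not compute the cofiber of $\{x\neq 0\}\hookrightarrow X(m,r,s)$, and the ``rigidity-in-families'' statement you posit to bridge from smooth deformations of the center back to the cuspidal specialization is exactly the missing content -- nothing of the sort appears in the paper, and $\mathbb{A}^1$-contractibility is not known to be constant in families (indeed Theorem \ref{thm:family} and Corollary \ref{cor:family} exhibit families that are interesting precisely because fibrewise comparison is delicate). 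Your auxiliary claim that the normalization $\mathbb{A}^1\to C$ is an $\mathbb{A}^1$-weak equivalence because it is a universal homeomorphism is also unjustified: universal homeomorphisms are not known to be invertible in the unstable category $\mathcal H(k)$, and even granting that $C$ is contractible as a Nisnevich sheaf, no established modification criterion would accept that as input. So the argument as written is a reduction of the theorem to an open (and arguably harder) statement.

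For comparison, the paper avoids the divisor $\{x=0\}$ entirely and instead excises the \emph{smooth} line $L=\{x=y=t=0\}\cong\mathbb{A}^1$, so that homotopy purity legitimately identifies $X/(X\setminus L)$ with $L_+\wedge(\pone)^{\wedge 2}$. It then compares the cofiber sequence of $(X, X\setminus L)$ with that of $(\mathbb{A}^2,\mathbb{A}^2\setminus\{0\})$ via the hyperplane $P=\{z=0\}$, using the ``very weak five lemma'' of Lemma \ref{lem:vwfl}. The two genuinely hard inputs are (i) the explicit geometric construction of Proposition \ref{prop:explicit}, realizing $X\setminus L$ and $\mathbb{A}^3\setminus\{x=y=0\}$ as $\mathbb{A}^1$-bundles over a common algebraic space, whence $X\setminus L\simeq\mathbb{A}^2\setminus\{0\}$; and (ii) a Brouwer-degree computation in Milnor--Witt $K$-theory (Proposition \ref{prop:twoways}) showing the composite $\mathbb{A}^2\setminus\{0\}\cong P\setminus L\to X\setminus L\simeq\mathbb{A}^2\setminus\{0\}$ is an equivalence. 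Neither ingredient has a counterpart in your outline, and both would need to be replaced by something concrete for your strategy to become a proof.
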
 

While the methods in \cite{Hoyois15} are quite sophisticated, such general techniques are not available in $\mathcal H(k)$ and our argument is more elementary. Yet, our proof requires a non trivial geometric construction which proves, together with an application of the Brouwer degree in motivic homotopy theory as developed in \cite[Theorem 6.40, Corollary 6.43]{Morel08}, that $L=\{x=y=z=0\}$ is an affine line $\A^1$ in $X(m,r,s)$ such that $X(m,r,s)\setminus L$ is weak-equivalent to $\A^2_k\setminus \{0\}$. The rest of the argument rests on a weak version of the five lemma (Lemma \ref{lem:vwfl} below) which works in the general framework of pointed model categories.  Let us note that the results of \cite{Hoyois15} are consequences of our theorem above. 

A more general cancellation problem is to know whether two smooth affine schemes $X$ and $Y$ over $k$ such that there exists an isomorphism $X\times\A^1_k\simeq Y\times \A^1_k$ are actually isomorphic. In general, the answer is known to be negative (see e.g. \cite{Ru14} for a survey), and we consider a set of examples in this paper, which generalizes families introduced in \cite{DMJ11} in the complex case. Namely, let $q(x)\in k[x]$ be a polynomial such that $q(0)\in k^\times$, and consider the closed subvariety $X(m,r,s,q)$ defined by the equation $x^{m}z=y^{r}+t^{s}+q(x)x$. 

\begin{thm}\label{thm:family}
For every fixed $m\geq2$, $r,s\geq2$ with $\gcd(r,s)=1$, the following
holds:
\begin{itemize}
\item[a)] The varieties $X(m,r,s,q_{1})$ and $X(m,r,s,q_{2})$ are $k$-isomorphic
if and only if there exists $\lambda,\varepsilon\in k^{*}$ such that
$q_{2}(x)\equiv\varepsilon q_{1}(\lambda x)$ modulo $x^{m-1}$.
\item[b)] The cylinders $X(m,r,s,q)\times\mathbb{A}^{1}_k$ are all isomorphic.
\end{itemize}
\end{thm}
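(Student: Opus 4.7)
The plan splits naturally into three parts: the ``if'' direction of (a), the ``only if'' direction of (a), and part (b). The first and last are explicit coordinate manipulations; the middle uses Makar--Limanov-type invariants.

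For the ``if'' direction of (a), assume $q_{2}(x)=\varepsilon q_{1}(\lambda x)+x^{m-1}h(x)$ for some $h\in k[x]$. Choose $a,b,c,d\in k^{*}$ with $a=\lambda$, $b=\lambda^{1-m}\varepsilon^{-1}$, and $c^{r}=d^{s}=\lambda\varepsilon^{-1}$ (possible after adjusting $\lambda,\varepsilon$ inside their equivalence class, using $\gcd(r,s)=1$), and put $G(X):=-bh(X)$. A direct substitution shows that the ring homomorphism determined by $x\mapsto aX$, $y\mapsto cY$, $t\mapsto dT$, $z\mapsto bZ+G(X)$ sends $x^{m}z-y^{r}-t^{s}-q_{1}(x)x$ into the defining ideal of $X(m,r,s,q_{2})$ and is a bijection on coordinate rings, hence an isomorphism $X(m,r,s,q_{2})\xrightarrow{\sim}X(m,r,s,q_{1})$.

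For the ``only if'' direction of (a), I would first establish $\mathrm{ML}(X(m,r,s,q))=k[x]$ by adapting the LND computation of \cite{KML97}: exhibit two explicit locally nilpotent derivations on $\mathcal{O}(X(m,r,s,q))$ (for instance one with $\partial(x)=0$, $\partial(y)=sx^{m}t^{s-1}$, $\partial(t)=-rx^{m}y^{r-1}$ and a companion obtained by swapping the roles of some coordinates), and verify that the intersection of their kernels reduces to $k[x]$. Any $k$-isomorphism $\phi$ then satisfies $\phi^{*}(x_{2})=ax_{1}+b$ with $a\in k^{*}$. Next, the projection $x\colon X(m,r,s,q)\to\aone$ has fibers isomorphic to $\mathbb{A}^{2}$ away from $0$, while the fiber over $0$ is $\{y^{r}+t^{s}=0\}\times\aone_{z}$ (singular); this distinguishes the divisor $\{x=0\}$ intrinsically and forces $b=0$. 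Finally, reducing modulo $x^{m}$ and analyzing the induced automorphism of $\mathcal{O}(X(m,r,s,q))/(x^{m})$ (using the uniqueness of the cuspidal point on $\{y^{r}+t^{s}=0\}$ and the fact that the automorphism group of that cuspidal curve is $\gm$ acting diagonally) shows that every such automorphism is of the diagonal scaling form, up to a $z$-shift by $x^{m-1}k[x]$. This yields the congruence $q_{2}(x)\equiv\varepsilon q_{1}(\lambda x)\pmod{x^{m-1}}$ with scalars matched to those appearing in the ``if'' construction.

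For part (b), I would construct an explicit isomorphism $X(m,r,s,q)\times\aone_{u}\simeq X(m,r,s,q(0))\times\aone_{u}$, after which a diagonal rescaling reduces $q(0)$ to $1$. Write $q(x)=q(0)+xR(x)$, and use $\gcd(r,s)=1$ (pick $\sigma,\tau\in\mathbb{Z}_{>0}$ with $r\sigma-s\tau=1$) to design substitutions of the form $y\mapsto y+xP(x,u)$, $t\mapsto t+xQ(x,u)$, $z\mapsto z+S(x,y,t,u)$, $u\mapsto u+T(x,y,t,z)$, with $P,Q$ chosen so that the $x$-expansion of $(y+xP)^{r}+(t+xQ)^{s}$ successively cancels the coefficients of $R(x)$. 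The auxiliary variable $u$ is essential: the obstructions to cancelling $R(x)$ using only $(y,t,z)$ lie outside the ideal $(y^{r-1},t^{s-1})\subset k[y,t]$, but allowing $P,Q$ to depend on $u$ supplies the missing degrees of freedom, and the companion substitution on $u$ keeps the map polynomial and invertible. The principal obstacle is the ``only if'' direction of (a), and specifically the extraction of the fine invariant $q\bmod x^{m-1}$ (up to the stated $\gm\times\gm$-action) from $\mathcal{O}(X(m,r,s,q))$: this requires both the ML-invariant computation and a careful order-by-order analysis of how automorphisms preserving $x$ interact with the thickened central fiber.
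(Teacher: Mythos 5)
For part (a) the paper itself gives no proof: it invokes \cite[Theorem 4.2]{DMJ14} (stated over $\mathbb{C}$, with the remark that the argument persists in characteristic zero), so your Makar--Limanov sketch is an attempt to reprove that external result rather than a variant of anything done here; as an outline of the ``only if'' direction it is plausible but leaves the hard steps (the computation $\mathrm{ML}=k[x]$ and the analysis of automorphisms of the thickened central fibre) as assertions. More concretely, your ``if'' direction already has a gap over a non-closed field: the diagonal substitution needs $c,d\in k^{*}$ with $c^{r}=d^{s}=\lambda\varepsilon^{-1}$, i.e.\ $\lambda\varepsilon^{-1}$ must be \emph{simultaneously} an $r$-th and an $s$-th power, and the claimed ``adjustment of $(\lambda,\varepsilon)$ inside their equivalence class'' is unavailable when $q_{1}$ is non-constant modulo $x^{m-1}$, since the congruence then essentially pins down $(\lambda,\varepsilon)$ (try $q_{1}=1+x$, $q_{2}=1+2x$, $m=3$ over $\Q$). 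Note that $\gcd(r,s)=1$ only gives $\nu=(\nu^{u})^{r}(\nu^{v})^{s}$, a \emph{product} of an $r$-th and an $s$-th power, not a common one.

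The serious problem is part (b), where the proposed mechanism cannot work. If $P,Q$ depend only on $(x,u)$, then $(y+xP)^{r}+(t+xQ)^{s}-y^{r}-t^{s}$ contains the cross terms $rxPy^{r-1}$, $sxQt^{s-1}$, etc., which are not divisible by $x^{m}$ and hence cannot be absorbed into a shift of $z$ (absorbing them would force $P\in(x^{m-1})$, after which the substitution changes nothing modulo $x^{m}$); and since $u$ does not occur in the defining equation of $X(m,r,s,q)\times\A^{1}_{u}$, the companion substitution $u\mapsto u+T(x,y,t,z)$ has no effect on that equation and cannot ``supply missing degrees of freedom''. The terms you need, namely $x(q(x)-q(0))$, arise only from the $y,t$-free parts $x^{r}P^{r}+x^{s}Q^{s}$, while the unwanted mixed terms survive. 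The mechanism has to be multiplicative, and that is exactly what the paper does: in characteristic zero one picks $g_{1},g_{2}\in k[x]$ with $g_{i}(0)=1$ and $g_{1}^{r}\equiv g_{2}^{s}\equiv q(x)$ modulo $x^{m}$ (via $\exp(\tfrac{1}{r}xf(x))$ where $\exp(xf(x))\equiv q(x)$), so that $y\mapsto g_{1}y$, $t\mapsto g_{2}t$ turns $y^{r}+t^{s}+xq(x)$ into $q(x)(y^{r}+t^{s}+x)$ modulo $x^{m}$. The extra variable then plays a completely different role from the one you assign it: since $\mathrm{diag}(g_{1},g_{2})$ is not invertible over $k[x]$, one completes it to a matrix in $\mathrm{GL}_{3}(k[x])$ using the entries $x^{m}$ and the fact that $x^{m}g_{1}$, $x^{m}g_{2}$, $g_{1}g_{2}$ generate the unit ideal, obtaining an automorphism of $\A^{4}=\Spec(k[x,y,t,w])$ carrying the ideal $(x^{m},y^{r}+t^{s}+xq(x))$ onto $(x^{m},y^{r}+t^{s}+x)$; one then identifies $X_{q}\times\A^{1}\rightarrow\A^{4}$ with the affine modification of $\A^{4}$ along this ideal and concludes by the universal property of affine modifications \cite{KaZa99}. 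Without the multiplicative roots and this affine-modification step (or an equivalent device), your substitutions will not close up.
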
 

In particular, we see that the varieties $X(m,r,s,q)$ and $X(m,r,s,q(0))\cong X(m,r,s)$ are stably isomorphic and hence isomorphic in $\mathcal H(k)$. In view of Theorem \ref{thm:KRcontractible}, it follows that the varieties $X(m,r,s,q)$ are all contractible. Consequently, we obtain moduli of arbitrary positive dimension of pairwise non isomorphic, $\mathbb{A}^{1}$-stably
isomorphic, $\mathbb{A}^{1}$-contractible smooth affine threefolds.

\begin{cor}\label{cor:family}
Let $Y=\mathrm{Spec}(k[a_{2},\ldots,a_{m-1}])$, $m\geq4$
and let $\mathfrak{X}(m,r,s)\subset Y\times\mathbb{A}^{3}$ be the
subvariety defined by the equation 
\[
x^{m}z=y^{r}+t^{s}+x+x^{2}+\sum_{i=2}^{m-1}a_{i}x^{i+1}.
\]
Then $\mathrm{p}_{Y}:\mathfrak{X}(m,r,s)\rightarrow Y$ is a smooth
family whose fibers, closed or not, are all $\mathbb{A}^{1}$-contractible
and non isomorphic to $\mathbb{A}^{3}$ over the corresponding residue
fields. Furthermore, the fibers of $\mathrm{p}_{Y}$ over the $k$-rational
points of $Y$ are pairwise non isomorphic $k$-varieties which
are all $\mathbb{A}^{1}$-stably isomorphic.
\end{cor}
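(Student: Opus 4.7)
The plan is to obtain Corollary \ref{cor:family} by applying Theorems \ref{thm:KRcontractible} and \ref{thm:family} fiberwise over the scheme $Y$.

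First I would verify that $\mathrm{p}_Y$ is smooth via the Jacobian criterion: the partial of the defining polynomial with respect to $z$ is $x^m$, which is nonzero off $\{x=0\}$, while on $\{x=0\}$ the partial with respect to $x$ specializes to $-1$. Hence $\mathrm{p}_Y$ is smooth (of relative dimension three) everywhere. Next, for any point $y \in Y$ with residue field $K = k(y)$, the fiber of $\mathrm{p}_Y$ over $y$ is the threefold $X_K(m,r,s,q_y) \subset \A^3_K$ defined by $x^m z = y^r + t^s + x\,q_y(x)$, where
\[
q_y(x) = 1 + x + \sum_{i=2}^{m-1} a_i(y)\,x^i \in K[x]
\]
satisfies $q_y(0) = 1 \in K^\times$. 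Applying Theorem \ref{thm:family}(b) over $K$ gives an isomorphism $X_K(m,r,s,q_y) \times \A^1_K \cong X_K(m,r,s) \times \A^1_K$; Theorem \ref{thm:KRcontractible} over $K$ shows the right hand side is $\A^1$-contractible, hence so is the left hand side, and therefore so is its retract $X_K(m,r,s,q_y)$. Applied to $k$-rational fibers $y_1, y_2 \in Y(k)$, the same chain of isomorphisms yields $\mathfrak{X}(m,r,s)_{y_1} \times \A^1_k \cong \mathfrak{X}(m,r,s)_{y_2} \times \A^1_k$, which is the claimed $\A^1$-stable isomorphism.

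For the non-isomorphism of each fiber with $\A^3_K$, I would invoke the standard Makar-Limanov invariant computation for Koras-Russell type threefolds $X_K(m,r,s,q)$ with $q(0) \in K^\times$, following \cite{KML97} and the generalization in \cite{DMJ11}, which produces a non-trivial invariant distinguishing the fibers from affine space. Pairwise non-isomorphism of the fibers over distinct $k$-rational points reduces, via Theorem \ref{thm:family}(a), to the question of when $q_{y_2}(x) \equiv \varepsilon\,q_{y_1}(\lambda x) \pmod{x^{m-1}}$; comparing the constant and linear coefficients forces $\varepsilon = \lambda = 1$, and comparing the remaining coefficients then forces $a_i(y_1) = a_i(y_2)$, hence $y_1 = y_2$.

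Essentially all of the substantive content has been assembled in Theorems \ref{thm:KRcontractible} and \ref{thm:family}. The one point deserving a brief sanity check is that these results, proved for varieties over the base field $k$, also hold after base change to the residue fields $K = k(y)$ of arbitrary (possibly non-closed) points of $Y$; since the arguments rely on explicit birational constructions and a motivic Brouwer degree computation that are manifestly functorial in the base field, this presents no obstacle.
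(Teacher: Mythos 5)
Your overall strategy---identifying the fiber of $\mathrm{p}_Y$ over a point with residue field $K$ as $X_K(m,r,s,q_y)$ and then applying Theorems \ref{thm:KRcontractible} and \ref{thm:family} fiberwise---is exactly how the paper treats this corollary; it gives no separate proof beyond the paragraph preceding the statement and the discussion at the start of Section \ref{sec:main}. One genuine difference: for the non-isomorphism with $\A^3$ the paper does not use the Makar-Limanov invariant, but instead observes that $\mathrm{pr}_x$ restricts on each fiber to a faithfully flat $\A^2$-fibration whose fiber over $x=0$ is the cylinder over the cuspidal curve $\{y^r+t^s=0\}$, and then invokes Kaliman's theorem \cite{Ka02}. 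Your route through the Makar-Limanov invariant is legitimate (it is what underlies the classification result \cite{DMJ14} quoted in Theorem \ref{thm:family}(a)), but you only gesture at the computation; the paper's fibration argument is the more economical one here.

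The real problem is in your last step. Theorem \ref{thm:family}(a) tests the congruence $q_{2}(x)\equiv\varepsilon q_{1}(\lambda x)$ only modulo $x^{m-1}$, i.e.\ it only sees the coefficients of $x^{0},\dots,x^{m-2}$ of $q$. For the family at hand, $q_y(x)=1+x+\sum_{i=2}^{m-1}a_i(y)x^{i}$, so after your (correct) normalization $\varepsilon=\lambda=1$ the comparison of coefficients forces $a_i(y_1)=a_i(y_2)$ only for $2\le i\le m-2$: the top coefficient $a_{m-1}$ is invisible modulo $x^{m-1}$. This is not an artifact of the theorem being too weak. The term $a_{m-1}x^{m-1}$ of $q$ contributes $a_{m-1}x^{m}$ to $xq(x)$, which is absorbed by the triangular automorphism $z\mapsto z-a_{m-1}$ of the ambient space, so fibers over points differing only in $a_{m-1}$ genuinely are isomorphic. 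Your conclusion ``hence $y_1=y_2$'' therefore fails, and in fact the statement as printed is off by one: the parameter space should be $\mathrm{Spec}(k[a_2,\dots,a_{m-2}])$ (equivalently, the sum should stop at $i=m-2$), which still has arbitrary positive dimension as $m\ge 4$ varies. A careful write-up has to either record this correction or explain explicitly why the parameter $a_{m-1}$ must be discarded; as it stands, your coefficient comparison silently asserts something that the quoted theorem does not give.
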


Let us try to put the above result into context. Inspired from the topological situation (see the beautiful introduction of \cite{Asok07} for a survey), Asok and Doran asked a series of interesting questions on $\aone$-contractible varieties. The first one asks if every smooth $\A^1$-contractible variety can be constructed as the quotient of an affine space by the free action of a unipotent group (\cite[Question 1.1]{Asok07}). While there are many examples of $\aone$-contractible varieties constructed in this manner, the answer to this question is now known to be negative in general (\cite[Theorem 3.1.1, Corollary 3.2.2]{Asok14}). The next question of interest is the abundance of "exotic" $\aone$-contractible varieties, i.e. $\aone$-contractible varieties that are not isomorphic to an affine space, especially affine ones. In \cite[Theorem 5.3]{Asok07}, the authors show that for any $n\geq 0$ there exists a connected $k$-scheme $S$ of dimension $n$ and a smooth morphism $f:Z\to S$ of arbitrary relative dimension $m\geq 6$ such that the fibers over rational points are pairwise non-isomorphic, quasi-affine and $\A^1$-contractible. More generally, they were able to prove that there are infinitely many isomorphism classes of $\aone$-contractible quasi-affine varieties of dimension $m\geq 4$ (\cite[Theorem 5.1]{Asok07}). Together with the fact that there are no exotic $\aone$-contractible surfaces, the picture was pretty much complete leaving only the case of threefolds open. Corollary \ref{cor:family} above fills this gap and answers in particular Conjecture 1.4 in \cite{Asok07}.

The organization of the paper is as follows. In Section \ref{sec:prelim}, we give a very quick introduction to the (unstable) $\aone$-homotopy category $\mathcal H(k)$ of Morel and Voevodsky focusing only on the features necessary to understand this paper. In Section \ref{sec:main}, we give the proofs of our theorems modulo some technical results that are deferred to Section \ref{sec:technical}. The point in separating the core of the arguments and the technicalities is to make the structure more transparent, and we hope we have succeeded in that task, at least to some extent. 

To conclude, let us mention that there is a second family of Koras-Russell threefolds $X$, defined by equations of the form $(x^m+t^s)^{d}z=y^r+x$ in $\A^4_k$, where $d\geq 1$, and $m,r,s \geq 2$ are integers such that $m$ and $rs$ are coprime \cite{KR97}. We expect that the results of this paper will also hold for this family: in particular, these threefolds also contain a special affine line $L=\{x=y=t=0\}$, but we don't know at the moment whether $X\setminus L$ is weakly equivalent to $\A^2\setminus \{0\}$.

\subsection*{Conventions}

We work over a base field $k$ of characteristic $0$. The schemes are essentially of finite type over $\Spec k$ and separated.

\subsection*{Acknowlegments}
We would like to thank Aravind Asok for very useful conversations and comments.

\section{Preliminaries}\label{sec:prelim}

\subsection{A user's guide to the $\aone$-homotopy category}

The rough idea of the construction is to enlarge the category of smooth (separated and finite type) $k$-schemes $\mathrm{Sm}_k$ in order to be able to perform quotients in general, add some simplicial information in order to be able to do homotopy theory and then artificially invert all the morphisms $X\times \A^1\to X$. We refer to \cite{Morel99} for more information and only sketch the necessary steps. If $X$ is a smooth scheme, then it can be considered as a sheaf of sets $X:\mathrm{Sm}_k\to \mathrm{Sets}$ in any reasonable Grotendieck topology. For reasons explained in \cite{Morel99}, the convenient topology to consider is the Nisnevich topology. Recall that covers $\{u_\alpha:X_\alpha\rightarrow X\}$ for this topology are collections of \'etale maps such that for every (possibly non-closed) point $x\in X$, there exists $\alpha$ and a point $y\in X_{\alpha}$ such that $u_{\alpha}(y)=x$ and the induced map of residue fields $k(x)\rightarrow k(y)$ is an isomorphism. Now, any set can be seen as a (constant) simplicial set, and it follows that $X$ can be seen as a sheaf of simplicial sets $X:\mathrm{Sm}_k\to \mathrm{SSets}$. On the other hand, any simplicial set can be seen as a constant sheaf of simplicial sets and it follows that both schemes and simplicial sets can be seen as objects of the category $\mathrm{Spc}_k$ of simplicial (Nisnevich) sheaves on $\mathrm{Sm}_k$. The objects of $\mathrm{Spc}_k$ are called \emph{spaces}. In particular, $\Spec k$ is a space, and a \emph{pointed space} is an object $\mathscr X$ of $\mathrm{Spc}_k$ together with a morphism $x:\Spec k\to \mathscr X$ that we often denote by $(\mathscr X,x)$ or even $\mathscr X$ if the base point is obvious. 

Any space $\mathscr X$ has stalks at the points of the Nisnevich topology, and a (pointed) morphism of spaces $f:\mathscr X\to \mathscr Y$ is said to be a weak-equivalence if it induces a weak-equivalence of simplicial sets on stalks. One can put a model structure on $\mathrm{Spc}_k$ that allow to invert weak-equivalences in a good way, and the corresponding homotopy category is the \emph{simplicial homotopy category of smooth schemes}. Further inverting the morphisms $\mathscr X\times \A^1\to \mathscr X$, one gets the $\A^1$-homotopy category of schemes $\mathcal H(k)$ and its pointed version $\mathcal H_\bullet(k)$. For simplicity, we often omit the base point in the notation. One of the principal feature of $\mathcal H_\bullet(k)$ is that it is a pointed model category. As such, there is a notion of cofiber sequence which can be seen as a machine to produce long exact sequences of pointed sets and groups. Another important feature is that the projection morphisms $X\times \A^1\to X$ are isomorphisms in $\mathcal H(k)$, and more generally that the projection of an affine bundle $Y\to X$ to its base is an isomorphism. This is in particular the case for torsors under vector bundles. Finally, let us recall that a space is called \emph{contractible} if the natural morphism $\mathscr X\to \Spec k$ is an isomorphism in $\mathcal H(k)$.  

\section{Proofs}\label{sec:main}

\subsection{Proof of Theorem \ref{thm:KRcontractible}}

Let then $X(m,r,s)=\{x^mz=y^r+t^s+x\}$ be as in the introduction, and write $X=X(m,r,s)$ for simplicity. Every such $X$ contains a hypersurface $P=\{z=0\}\subset X$ isomorphic to $\mathbb{A}^{2}=\mathrm{Spec}(k[y,t])$, and it is therefore enough to show that
the inclusion $P\hookrightarrow X$ is a weak-equivalence. On the
other hand, we observe that $X$ contains a line $L=\{x=y=t=0\}\cong\mathrm{Spec}(k[z])\cong\mathbb{A}^{1}$
intersecting $P$ transversally at the unique point $(0,0,0,0)$,
for which we have a pull-back square 
\[ 
\xymatrix{P\setminus L\cong \A^2\setminus \{0\}\ar[r]\ar[d]_-i & P\cong\A^2\ar[d] \\ X\setminus L\ar[r] & X} 
\] 
and an associated commutative diagram of cofiber sequences

\[\xymatrix{\A^2\setminus \{0\}\ar[r]\ar[d]_-i & \A^2\ar[d]\ar[r] & (\pone)^{\wedge 2}\ar[d] \\ X\setminus L\ar[r] & X\ar[r] & X/(X\setminus L).}\]

Applying a variant of the five lemma described in Section \ref{sub:weak} below, we are reduced to prove that
 $i:\mathbb{A}^{2}\setminus \{0\}\cong P\setminus L\hookrightarrow X\setminus L$
and the induced map $(\pone)^{\wedge 2}\rightarrow X/(X\setminus L)$
are both weak-equivalences. This is done in two steps: we first construct
in subsection \ref{sub:weak-equiv} below an explicit weak-equivalence
$X\setminus L\simeq\mathbb{A}^{2}\setminus \{0\}$. Together with the fact
that the normal cone to $L$ in $X$ is generated by the global sections
$y$ and $t$, this implies in particular that $(\pone)^{\wedge 2}\rightarrow X/(X\setminus Z)\simeq (\pone)^{\wedge 2}$
is a weak-equivalence. Then it is enough to show that the composition
$\mathbb{A}^{2}\setminus \{0\}\cong P\setminus L\hookrightarrow X\setminus L\simeq\mathbb{A}^{2}\setminus \{0\}$
is also weak-equivalence, which is done by explicit computation in
subsection \ref{sub:computations} below.

\subsection{Proof of Theorem \ref{thm:family}}

Let us now consider $X(m,r,s,q)$ as in the introduction. Recall that these varieties are defined
by equations of the form 
\[
x^{m}z=y^{r}+t^{s}+xq(x)
\]
where $m\geq2$, $r,s\geq1$ are relatively prime and $q\in k[x]$
is a polynomial such that $q(0)\in k^{*}$. Note that if either $r$
or $s$ is equal to $1$ then $X(m,r,s,q)\cong \A^3=\mathrm{Spec}(k[x,t,z])$ or  
$X(m,r,s,q)\cong \A^3=\mathrm{Spec}(k[x,y,z])$ respectively. Otherwise, if $r,s\geq2$,
then the restriction $\pi:X(m,r,s,q)\rightarrow\mathbb{A}^{1}$
of the first projection is a faithfully flat morphism with all fibers isomorphic to the affine plane $\mathbb{A}^{2}$
over the corresponding residue fields, except for $\pi^{-1}(0)$ which is isomorphic to the cylinder $D\times \mathrm{Spec}(k[z])$ over the singular plane curve $D=\{y^r+t^s=0\}$.
It then follows for instance from \cite{Ka02} that $X(m,r,s,q)$ is not isomorphic to $\mathbb{A}^{3}$. 

Let us now pass to the proof of Theorem \ref{thm:family} itself. 
The first assertion is a particular case of \cite[Theorem 4.2]{DMJ14}
which is stated over the field of complex numbers but whose proof
remains valid over any field of characteristic zero. The second assertion
follows from a similar argument as in the proof of \cite[Theorem 1.3]{DMJ11}.
Writing $X_{q}=X(m,r,s,q)$, it is enough to show that $X_{q}\times\mathbb{A}^{1}$
is $k$-isomorphic to $X_{q(0)}\times\mathbb{A}^{1}$. Up to a linear
change of coordinate on the ambient space, we may assume that $q(0)=1$.
Then we let $f(x)\in k[x]$ be a polynomial such that $\exp(xf(x))\equiv q(x)$
modulo $x^{m}$, and we choose relatively prime polynomials $g_{1},g_{2}\in k[x]$
such that ${\displaystyle \exp(\frac{1}{r}xf(x))\equiv}g_{1}(x)$ and
${\displaystyle \exp(\frac{1}{s}xf(x))\equiv}g_{2}(x)$ modulo $x^{m}$.
Since $g_{1}(0)=g_{2}(0)=1$, the polynomials $x^{m}g_{1}(x)$, $x^{m}g_{2}(x)$
and $g_{1}(x)g_{2}(x)$ generate the unit ideal in $k[x]$. Therefore
we can find polynomials $h_{1},h_{2},h_{3}\in k[x]$ such that the
matrix 
\[
\left(\begin{array}{ccc}
g_{1}(x) & 0 & x^{m}\\
0 & g_{2}(x) & x^{m}\\
h_{1}(x) & h_{2}(x) & h_{3}(x)
\end{array}\right)
\]
belongs $\mathrm{GL}_{3}(k[x])$. This matrix hence defines
a $k[x]$-automorphism of $k[x][y,t,w]$, and a direct computation
shows that the latter maps the ideal $(x^{m},y^{r}+t^{s}+xq(x))$
onto the one $(x^{m},q(x)(y^{r}+t^{s}+x))=(x^{m},y^{r}+t^{s}+x)$,
where the equality follows from the fact that $x$ and $q(x)$ are
relatively prime. These ideals coincide respectively with the centers
of the affine birational morphisms $\sigma_{q}=\mathrm{pr}_{x,y,t,w}:X_{q}\times\mathrm{Spec}(k[w])\rightarrow\mathrm{Spec}(k[x,y,t,w])$
and $\sigma_{1}=\mathrm{pr}_{x,y,t,w}:X_{1}\times\mathrm{Spec}(k[w])\rightarrow\mathrm{Spec}(k[x,y,t,w])$
in the sense of \cite[Theorem 1.1]{KaZa99}, and it follows from the
universal property of affine modifications \cite[Proposition 2.1]{KaZa99}
that the corresponding $k$-automorphism of $\mathbb{A}^{4}=\mathrm{Spec}(k[x,y,t,w])$
lifts to an isomorphism between $X_{q}\times\mathbb{A}^{1}$ and $X_{1}\times\mathbb{A}^{1}$. 

As a corollary, we get the following result.

\begin{cor}
The threefolds $X(m,r,s,q)$ are all $\mathbb{A}^{1}$-contractible. 
\end{cor}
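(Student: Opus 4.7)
The plan is to deduce this directly by combining Theorem \ref{thm:family}(b) with Theorem \ref{thm:KRcontractible}, using the fact that passage to the cylinder is invisible in $\mathcal{H}(k)$. So there is essentially no new content to produce: the work has already been done.

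More concretely, I would first observe that $X(m,r,s,1)$ coincides with the Koras-Russell threefold $X(m,r,s)$ of the introduction (taking $q \equiv 1$ in the equation $x^m z = y^r + t^s + xq(x)$). Theorem \ref{thm:family}(b) then supplies a $k$-isomorphism
\[
X(m,r,s,q) \times \mathbb{A}^1_k \;\cong\; X(m,r,s,1) \times \mathbb{A}^1_k \;=\; X(m,r,s) \times \mathbb{A}^1_k.
\]
In particular the two threefolds become isomorphic after a single cylinder and are therefore \emph{$\mathbb{A}^1$-stably isomorphic} in the sense of the introduction.

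Next I would invoke the standard fact recalled in Section \ref{sec:prelim} that the projection $\mathscr{X} \times \mathbb{A}^1 \to \mathscr{X}$ is an isomorphism in $\mathcal{H}(k)$ for every space $\mathscr{X}$. Applying this twice to the isomorphism above yields
\[
X(m,r,s,q) \;\simeq\; X(m,r,s,q) \times \mathbb{A}^1_k \;\cong\; X(m,r,s) \times \mathbb{A}^1_k \;\simeq\; X(m,r,s)
\]
in $\mathcal{H}(k)$. Combining this chain of weak equivalences with Theorem \ref{thm:KRcontractible}, which asserts that $X(m,r,s) \to \Spec k$ is an isomorphism in $\mathcal{H}(k)$, yields the desired $\mathbb{A}^1$-contractibility of $X(m,r,s,q)$.

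There is no real obstacle here: all the hard work is contained in Theorem \ref{thm:family}(b) (the explicit affine modification argument producing the cylinder isomorphism) and in Theorem \ref{thm:KRcontractible} (the contractibility of the Koras-Russell threefold itself). The corollary is merely the formal combination of these two inputs with the homotopy invariance of the affine line.
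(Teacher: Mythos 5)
Your proposal is correct and follows exactly the paper's own argument: the cylinder isomorphism from Theorem \ref{thm:family}(b) makes $X(m,r,s,q)$ $\mathbb{A}^1$-stably isomorphic to $X(m,r,s,1)=X(m,r,s)$, homotopy invariance of the affine line upgrades this to an isomorphism in $\mathcal H(k)$, and Theorem \ref{thm:KRcontractible} concludes. No differences worth noting.
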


\begin{proof}
Since $X(m,r,s,q)$ is stably isomorphic to $X(m,r,s)$, it follows that they are actually isomorphic in the $\mathbb A^1$-homotopy category. By Theorem \ref{thm:KRcontractible}, the latter is contractible and it follows that the former is also contractible. 
\end{proof}
  
\section{Technical results}\label{sec:technical}

\subsection{\label{sub:weak-equiv} An explicit weak-equivalence}

We let $X(s)=\{x^{m}z=y^{r}+t^{s}+x\}\subset\mathbb{A}^{4}$, $m,r\geq2$ are fixed, $s\geq1$ and $(r,s)=1$, and
we let $L=\{x=y=t=0\}\subset X(s)$. Note that $X(1)$ is isomorphic
to $\mathbb{A}^{3}=\mathrm{Spec}(k[x,y,z])$ and that $X(1)\setminus L\cong\mathrm{Spec}(k[x,y,z])\setminus\{x=y=0\}\cong(\mathbb{A}^{2}\setminus\{0\})\times\mathrm{Spec}(k[z])$.
Therefore $X(1)\setminus L$ is weakly-equivalent to $\mathbb{A}^{2}\setminus \{0\}$ and the following
proposition provides in turn by composition a weak-equivalence $X(s)\setminus L\simeq\mathbb{A}^{2}\setminus\{0\}$
for every $s\geq2$. 
\begin{prop}
\label{prop:explicit}For every $s\geq2$, there exists a smooth quasi-affine
fourfold $W$ which is simultaneously the total space of Zariski locally
trivial $\mathbb{A}^{1}$-bundles $p_{s}:W\rightarrow X(s)\setminus L$ and $p_{1}:W\rightarrow X(1)\setminus L$. 
\end{prop}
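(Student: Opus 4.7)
The plan is to construct $W$ explicitly as a closed subvariety of some affine space $\mathbb{A}^N$, equipped with two projections, and to verify the $\mathbb{A}^{1}$-bundle property by exhibiting local trivialisations on a standard Zariski cover of each base. The starting point is the observation that $X(s)$ and $X(1)$ differ only by replacing the monomial $t^{s}$ by $t$ in the defining equation $x^{m}z=y^{r}+t^{s}+x$, and that removing the line $L=\{x=y=t=0\}$ forces one of the coordinates $x,y,t$ to be non-vanishing. The natural candidate is therefore to introduce one or more auxiliary variables satisfying relations that simultaneously encode both defining equations, in a way that adds an $\mathbb{A}^{1}$-degree of freedom over each of $X(s)\setminus L$ and $X(1)\setminus L$. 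The techniques of Kaliman--Zaidenberg affine modifications used elsewhere in the paper (\cite{KaZa99}) suggest producing $W$ via a common modification of an ambient affine space whose centers are arranged so that the resulting birational projections become locally trivial $\mathbb{A}^{1}$-bundles once $L$ is removed from each target.

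Once $W$ is written down, I would first check smoothness and quasi-affineness via the Jacobian criterion applied to the defining equations, and by realising $W$ as an open subscheme of an ambient affine variety. Then, to verify the bundle structure of $p_{s}$, I would use the cover of $X(s)\setminus L$ by the three principal open affines $D(x)$, $D(y)$, $D(t)$ (which cover precisely because the simultaneous vanishing locus of $x,y,t$ on $X(s)$ is $L$). On each chart I would exhibit an explicit section of $p_{s}$ and check that the remaining fiber coordinate is affine-linear over that chart, trivialising $p_{s}^{-1}(D(\ast))\cong D(\ast)\times\mathbb{A}^{1}$. The corresponding analysis for $p_{1}$ over the analogous cover of $X(1)\setminus L$ is symmetric in structure.

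The main obstacle will be the asymmetry between $X(s)$ and $X(1)$ over the locus $\{x=0\}$. On $X(1)\cong\mathbb{A}^{3}$ the fiber of the first projection over $x=0$ is smooth (just $\mathbb{A}^{2}$), whereas on $X(s)$ the corresponding fiber is the non-reduced or reducible surface $\{y^{r}+t^{s}=0\}\times\mathrm{Spec}(k[z])$, which is why $L$ must be removed to begin with. The defining equations of $W$ must therefore be chosen delicately enough that, over the charts $D(y)$ and $D(t)$ of $X(s)\setminus L$ (where $x$ may vanish), the fiber of $p_{s}$ is still an honest $\mathbb{A}^{1}$, and symmetrically that the fiber of $p_{1}$ remains $\mathbb{A}^{1}$ over the analogous charts of $X(1)\setminus L$. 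Verifying these trivialisations simultaneously, by finding appropriate sections on each chart of each base, constitutes the bulk of the technical computation, and is where the coprimality hypothesis $\gcd(r,s)=1$ will presumably enter, allowing a B\'ezout-type identity among the relevant generators to provide sections where $x$ vanishes.
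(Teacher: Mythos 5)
Your proposal never actually constructs $W$, and that construction is the entire content of the proposition. Saying that one should ``introduce one or more auxiliary variables satisfying relations that simultaneously encode both defining equations'' restates the problem rather than solving it; everything that follows (Jacobian criterion, charts, sections) is a verification plan for an object you have not exhibited. There is no evidence offered that a common Kaliman--Zaidenberg modification of an ambient affine space with suitably arranged centers exists, and for general $m,r,s$ writing down such equations by hand is precisely the hard part. The paper does give a fully explicit model in one case only (the Russell cubic $x^2z=y^2-t^3+x$, in the Example following the proposition), and even there the formulas for the two projections are intricate and were derived \emph{after} the conceptual construction, not before it.

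The paper's route is different in kind. One first shows (Lemma \ref{lem:quotient-space}) that the projection $\mathrm{pr}_{x,t}\colon X(s)\setminus L\to\mathbb{A}^2\setminus\{0\}$ factors through an \emph{\'etale} locally trivial $\mathbb{A}^1$-bundle $\rho\colon X(s)\setminus L\to\mathfrak{S}$ over a smooth algebraic space $\mathfrak{S}$ obtained from $\mathbb{A}^2\setminus\{0\}$ by replacing the punctured line $\{x=0\}$ with its degree-$r$ \'etale cover $\mathrm{Spec}(k[t^{\pm1}][y]/(y^r+t^s))$. The hypothesis $\gcd(r,s)=1$ enters here, and not where you predict: it guarantees that adjoining an $r$-th root of $t^s$ is the same as adjoining an $r$-th root of $t$, so that $\mathfrak{S}$ is independent of $s$ and all the $X(s)\setminus L$ fiber over \emph{the same} space. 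Then $W=(X(s)\setminus L)\times_{\mathfrak{S}}(X(1)\setminus L)$; quasi-affineness follows because the bundle projections are affine morphisms, and the upgrade from \'etale to Zariski local triviality of $p_s$ and $p_1$ is not obtained by exhibiting sections over $D(x)$, $D(y)$, $D(t)$ but by citing that $\mathrm{Aut}(\mathbb{A}^1)=\mathbb{G}_m\ltimes\mathbb{G}_a$ is a special group. Note that $\mathfrak{S}$ is genuinely an algebraic space (a free quotient by a Galois group that does not exist as a scheme) and $\rho$ is only \'etale locally trivial over it; this is the structural reason a naive chart-by-chart trivialisation over the locus $\{x=0\}$, where the fiber of $\mathrm{pr}_{x,t}$ degenerates to a cylinder over $\{y^r+t^s=0\}$, cannot be carried out without first passing to the cover.
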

To derive the existence of $W$, we observe that 
\[q=\mathrm{pr}_{x,t}\mid_{X(s)}:X(s)\setminus L\rightarrow\mathbb{A}^{2}\setminus\{0\}=\mathrm{Spec}(k[x,t])\setminus\{0\}\]
is a faithfully flat morphism restricting to a trivial $\mathbb{A}^{1}$-bundle $\mathrm{Spec}(k[x^{\pm1},t])\times\mathrm{Spec}(k[y])$
over the principal affine open subset $U_{x}=\{x\neq0\}$ of $\mathbb{A}^{2}\setminus\{0\}$.
On the other hand, the fiber of $q$ over the punctured line $C_{0}=\{x=0\}\simeq\mathrm{Spec}(k[t^{\pm1}])$
is isomorphic to the cylinder $C_{1}\times\mathrm{Spec}(k[z])$ over
the finite \'etale cover $h_{0}:C_{1}=\mathrm{Spec}(k[t^{\pm1}][y]/(y^{r}+t^{s}))\rightarrow C_{0}$
of $C_{0}$. This indicates roughly that $q:X(s)\setminus L\rightarrow\mathbb{A}^{2}\setminus\{0\}$
factors through an \'etale locally trivial $\mathbb{A}^{1}$-bundle
$\rho:X(s)\setminus L\rightarrow\mathfrak{S}$ over a smooth algebraic
space $\delta:\mathfrak{S}\rightarrow\mathbb{A}^{2}\setminus\{0\}$, obtained
from $\mathbb{A}^{2}\setminus\{0\}$ by ``replacing $C_{0}$ by $C_{1}$''.
The precise construction of $\mathfrak{S}$ given in the proof of
Lemma \ref{lem:quotient-space} below reveals that the isomorphy class
of $\mathfrak{S}$ depends only on $r$, in particular it depends neither on $m$ nor on $s$.

As a consequence, all the quasi-affine threefolds $X(s)\setminus L$,
$s\geq1$, have the structure of \'etale locally trivial $\mathbb{A}^{1}$-bundles
over a same algebraic space $\mathfrak{S}$. It follows that for every
$s\geq2$, the fiber product $W=(X(s)\setminus L)\times_{\mathfrak{S}}(X(1)\setminus L)$
is an algebraic space which is simultaneously the total space of an
\'etale locally trivial $\mathbb{A}^{1}$-bundle over $X(s)\setminus L$ and $X(1)\setminus L$ via the
first and second projections $p_{s}:W\rightarrow X(s)\setminus L$ and $p_{1}:W\rightarrow X(1)\setminus L$
respectively. Since the structure morphism of such a bundle is affine,
$W$ is actually a quasi-affine scheme and the local triviality of
$p_{s}$ and $p_{1}$ in the Zariski topology is an immediate consequence
of the fact that the group $\mathrm{Aut}(\mathbb{A}^{1})=\mathbb{G}_{m}\ltimes\mathbb{G}_{a}$
is special \cite{Gr58}. 
\begin{lem}
\label{lem:quotient-space} There exists a smooth algebraic space
$\delta:\mathfrak{S}\rightarrow\mathbb{A}^{2}\setminus\{0\}$ such that
for every $s\geq1$, the morphism $q:X(s)\setminus L\rightarrow\mathbb{A}^{2}\setminus\{0\}$
factors through an \'etale locally trivial $\mathbb{A}^{1}$-bundle
$\rho:X(s)\setminus L\rightarrow\mathfrak{S}$. \end{lem}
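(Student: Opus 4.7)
The plan is to construct $\mathfrak{S}$ by an explicit étale atlas realizing the idea of ``replacing $C_{0}$ by $C_{1}$'' on the Zariski neighborhood $V=\{t\neq 0\}\subset\mathbb{A}^{2}\setminus\{0\}$ of $C_{0}$, and to then produce $\rho$ together with its $\mathbb{A}^{1}$-bundle structure by a direct change of variables on the defining equation of $X(s)$.

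For $\mathfrak{S}$, set $\tilde V=\mathrm{Spec}(k[x,t^{\pm1},\eta]/(\eta^{r}+t^{s}))$, with the natural projection $\tilde V\to V$ a finite étale cover of degree $r$ restricting to $h_{0}:C_{1}\to C_{0}$ over $C_{0}$. Since $\gcd(r,s)=1$, the parametrization $C_{1}\cong\mathbb{G}_{m}$ identifies $\tilde V\cong\mathbb{A}^{1}_{x}\times\mathbb{G}_{m,u}$ with the map $\tilde V\to V$ of the form $(x,u)\mapsto(x,\lambda u^{r})$ for some $\lambda\in k^{*}$; in particular this pair depends only on $r$ up to isomorphism over $\mathbb{A}^{1}_{x}$. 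Take the étale atlas $Z=U_{x}\sqcup\tilde V$ and define the equivalence relation $R\subset Z\times Z$ to consist of the two diagonals together with the identifications of each point of $V\cap U_{x}\subset U_{x}$ with each of its $r$ preimages in $\tilde V$, with no further identifications (in particular, distinct preimages of a point of $C_{0}$ remain distinct). The two projections $R\to Z$ are étale (being identities, open immersions, and restrictions of the étale map $\tilde V\to V$), so $\mathfrak{S}:=Z/R$ is a smooth algebraic space, and the structure morphism $\delta:\mathfrak{S}\to\mathbb{A}^{2}\setminus\{0\}$ is induced by $U_{x}\hookrightarrow\mathbb{A}^{2}\setminus\{0\}$ and $\tilde V\to V\hookrightarrow\mathbb{A}^{2}\setminus\{0\}$.

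For $\rho$, on $q^{-1}(U_{x})$ I take $\rho$ to be $q$ followed by the open immersion $U_{x}\hookrightarrow\mathfrak{S}$, which is manifestly a trivial $\mathbb{A}^{1}_{y}$-bundle since the defining equation solves for $z$. To define and analyze $\rho$ on $q^{-1}(V)$, I pass to the étale base change along $\tilde V\to\mathfrak{S}|_{V}$: the pullback $B=(X(s)\setminus L)\times_{\mathfrak{S}}\tilde V$ admits a natural second projection to $\tilde V$, and it suffices to verify that this projection is an étale-locally trivial $\mathbb{A}^{1}$-bundle. Setting $w=y-\eta$ and using the factorization $y^{r}-\eta^{r}=wH(w,\eta)$ with $H(0,\eta)=r\eta^{r-1}$ invertible on $\tilde V$, the defining equation becomes $x^{m}z-x=wH(w,\eta)$, and one identifies $B$ with the open complement inside $\mathrm{Spec}(k[x,u^{\pm1},w,z]/(x^{m}z-x-wH))$ of the codimension-two closed subscheme $V(x,H)$. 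Over the Zariski open $\{x\neq 0\}\subset\tilde V$ the equation solves for $z$, giving $B|_{x\neq 0}\cong\tilde V|_{x\neq 0}\times\mathbb{A}^{1}_{w}$; over an étale neighborhood of $\tilde V|_{x=0}$ the implicit function theorem solves $w=w(x,u,z)$ with $w(0,u,z)=0$, giving a trivialization with fiber coordinate $z$. These two pieces cover $\tilde V$, so $B\to\tilde V$ is an étale-locally trivial $\mathbb{A}^{1}$-bundle, and étale descent along $\tilde V\to\mathfrak{S}|_{V}$ transfers this to $\rho$.

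The hard part will be the explicit identification of $B$. The naive pullback $(X(s)\setminus L)\times_{V}\tilde V$ is irreducible but has a reducible fibre over $C_{0}$ with $r$ components indexed by the roots of $y^{r}=\eta^{r}$, whereas $B$ must retain only the ``diagonal'' component $y=\eta$ above $C_{0}$ and the full space above $V\cap U_{x}$. Checking algebraically that the ``wrong'' components are exactly those cut out by $V(x,H)$, so that removing them yields the open subscheme described above, and then that this open carries the two explicit trivializations, is the core calculation.
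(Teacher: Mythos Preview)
Your overall strategy---building $\mathfrak{S}$ from an explicit \'etale atlas $U_x\sqcup\tilde V$ and then verifying the $\mathbb{A}^1$-bundle structure after pulling back to $\tilde V$---is sound, but the implicit-function-theorem step has a genuine gap. You claim that over an \'etale neighborhood of $\tilde V|_{x=0}$ one can solve $w=w(x,u,z)$ and use $z$ as a fiber coordinate. But the projection $B\to\tilde V\times\mathbb{A}^1_z$ has degree $r$ over every point with $x\neq 0$: the equation $x^m z - x = wH(w,\eta)$ is degree $r$ in $w$, and for $x\neq 0$ none of the $r$ solutions is excluded by the condition $(x,H)\neq(0,0)$. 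So this projection cannot be an isomorphism over any \'etale open of $\tilde V$ meeting $\{x\neq 0\}$, and every \'etale neighborhood of $\{x=0\}$ in $\tilde V$ does meet it. The IFT only tells you that this projection is \'etale near the locus $\{x=0,w=0\}$ inside $B$, which is not a trivialization over an open of the base. (Two smaller points: $V(x,H)$ is actually contained in the hypersurface $\{x^m z - x = wH\}$ and hence has codimension one there, not two; and your equivalence relation $R$ must, for transitivity, also include $\tilde V|_{x\neq 0}\times_V\tilde V|_{x\neq 0}$.)

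The missing ingredient is precisely the Hensel step that drives the paper's argument: lift $\eta$ to $\sigma(x)\in k[u^{\pm1}][x]$ with $y^r+t^s+x\equiv (y-\sigma(x))\,\tilde Q(x,y)\pmod{x^m}$, which is a finite Newton iteration available because $r\eta^{r-1}$ is a unit on $\tilde V$. Writing $x^m(z-s)=(y-\sigma)\tilde Q$ for the resulting polynomial $s(x,y)$, the function $v:=(y-\sigma(x))/x^m=(z-s)/\tilde Q$ is regular on $B=B'\setminus V(x,\tilde Q)=B'\setminus V(x,H)$ and yields a global trivialization $B\cong\tilde V\times\mathbb{A}^1_v$. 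The paper carries this out after base-changing further to the Galois closure $C$ of $h_0:C_1\to C_0$, where $y^r+t^s$ splits completely; one then obtains $r$ such coordinates $u_{\overline g}=(y-\sigma_{\overline g}(x))/x^m$, realizing the pulled-back threefold as a Zariski-locally trivial $\mathbb{A}^1$-bundle over a scheme $S$ built by gluing $r$ copies of $C\times\mathbb{A}^1$ along $\{x\neq 0\}$, and sets $\mathfrak S_t=S/G$. Your route, working only with $C_1$ and an \'etale atlas, avoids the Galois-group bookkeeping and is a legitimate alternative once the Hensel lift replaces the IFT shortcut.
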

\begin{proof}
The quasi-affine threefold $X(s)\setminus L$ is covered by the two
principal affine open subsets $V_{x}=\{x\neq0\}$ and $V_{t}=\{t\neq0\}$.
Since $q\mid_{V_{x}}:V_{x}\rightarrow U_{x}=\mathrm{Spec}(k[x^{\pm1},t])$
is already a trivial $\mathbb{A}^{1}$-bundle as observed above, it
is enough to prove the existence of an algebraic space $\delta_{t}:\mathfrak{S}_{t}\rightarrow U_{t}=\mathrm{Spec}(k[x,t^{\pm1}])$
such that $q\mid_{V_{t}}:V_{t}\rightarrow U_{t}$ factors through an
\'etale locally trivial $\mathbb{A}^{1}$-bundle $V_{t}\rightarrow\mathfrak{S}_{t}$
and such that $\delta_{t}$ restricts to an isomorphism over $U_{t}\cap U_{x}$.
The desired algebraic space $\delta:\mathfrak{S}\rightarrow\mathbb{A}^{2}\setminus\{0\}$
will then be obtained by gluing $U_{x}$ and $\mathfrak{S}_{t}$ by
the identity along the open subsets $U_{x}\cap U_{t}$ and $\delta_{t}^{-1}(U_{t}\cap U_{x})\cong U_{t}\cap U_{x}$.

The algebraic space $\mathfrak{S}_{t}$ is constructed in the
form of a surface with an $r$-fold curve in the sense
of \cite[$\S$ 1.1]{DF14} as follows. First we let $h:C=\mathrm{Spec}(R)\rightarrow C_{0}$
be the Galois closure of the finite \'etale morphism $h_{0}:C_{1}=\mathrm{Spec}(k[t^{\pm1}][y]/(y^{r}+t^{s}))\rightarrow C_{0}=\mathrm{Spec}(k[t^{\pm1}])$,
that is, $C$ is the normalization of $C_{1}$ in the Galois closure $\kappa$
of the field extension $k(t)\hookrightarrow k(t)[y]/(y^{r}+t^{s})$.
The so defined field extension $\kappa$ is obtained from $k(t)$
by adding an $r$-th root of $t^{s}$, hence equivalently of $t$ since
$r$ and $s$ are relatively prime, and all $r$-th roots of $-1$.
In particular, neither $\kappa$ nor the curves $C_{1}$ and $C$
depend on $s$. By construction, $h:C\rightarrow C_{0}$ is an \'etale
torsor under the Galois group $G=\mathrm{Gal}(\kappa/k(t))$ which
factors as $h:C\stackrel{h_{1}}{\rightarrow}C_{1}\stackrel{h_{0}}{\rightarrow}C_{0}$
where $h_{1}:C\rightarrow C_{1}$ as an \'etale torsor under a certain
subgroup $H$ of $G$ of index $r$. 

The polynomial $y^{r}+t^{s}\in R[y]$ splits as $y^{r}+t^{s}=\prod_{\overline{g}\in G/H}(y-\lambda_{\overline{g}})$
for some elements $\lambda_{\overline{g}}\in R$, $\overline{g}\in G/H$
on which the Galois group $G$ acts transitively by $g'\cdot\lambda_{\overline{g}}=\lambda_{\overline{(g')^{-1}\cdot g}}$.
Furthermore, since $h_{0}:C_{1}\rightarrow C_{0}$ is \'etale, it
follows that for distinct $\overline{g},\overline{g}'\in G/H$, $\lambda_{\overline{g}}-\lambda_{\overline{g}'}\in R$
is an invertible regular function on $C$. This implies in turn that
there exists a collection of elements $\sigma_{\overline{g}}(x)\in B=R[x]$
with respective residue classes $\lambda_{\overline{g}}\in R=B/xB$
modulo $x$ on which $G$ acts by $g'\cdot\sigma_{\overline{g}}(x)=\sigma_{\overline{(g')^{-1}\cdot g}}(x)$,
and a $G$-invariant polynomial $s(x,y)\in B[y]$ such that in $B[y]$
one can write 
\[
y^{r}+t^{s}+x=\prod_{\overline{g}\in G/H}(y-\sigma_{\overline{g}}(x))+x^{m}s(x,y).
\]
It follows that $\tilde{V}_{t}=V_{t}\times_{U_{t}}\mathrm{Spec}(B)$
is isomorphic to the closed sub-variety of $\mathrm{Spec}(B[y,z_{1}])$
defined by the equation 
\[
x^{m}z_{1}=\prod_{\overline{g}\in G/H}(y-\sigma_{\overline{g}}(x))
\]
where $z_{1}=z-s(x,y)$. Since for distinct $\overline{g},\overline{g}'\in G/H$,
$\lambda_{\overline{g}}-\lambda_{\overline{g}'}$ is invertible, the
closed sub-scheme $\{x=0\}\subset\tilde{V}_{t}$ is the disjoint union
of $r$ closed sub-schemes $D_{\overline{g}}\cong\mathrm{Spec}(R[z_{1}])$
with respective defining ideals $(x,y-\sigma_{\overline{g}}(x))\in\Gamma(\tilde{V}_{t},\mathcal{O}_{\tilde{V}_{t}})$,
on which $G$ acts by permutation. The variety $\tilde{V}_{t}$ is
covered by the affine open subsets $\tilde{V}_{t,\overline{g}}=\tilde{V}_{t}\setminus\bigcup_{\overline{g}'\in (G/H)\setminus\{\overline{g}\}}D_{\overline{g}'}$,
$\overline{g}\in G/H$, and one checks using the above expression
for $y^{r}+t^{s}+x$ that the rational map 
\[
\tilde{V}_{t,\overline{g}}\dashrightarrow\mathrm{Spec}(B[u_{\overline{g}}]),\,(x,y,z_{1})\mapsto(x,\frac{y-\sigma_{\overline{g}}(x)}{x^{m}}=\frac{z}{\prod_{\overline{g}'\in (G/H) \setminus\{\overline{g}\}}(y-\sigma_{\overline{g}'}(x))})
\]
is an isomorphism of schemes over $\mathrm{Spec}(B)$. Altogether,
this implies that the faithfully flat morphism $\tilde{q}:\tilde{V}_{t}=V_t\times_{U_t} \mathrm{Spec}(B)\rightarrow\mathrm{Spec}(B)$
factors through a Zariski locally trivial $\mathbb{A}^{1}$-bundle
$\tilde{\rho}_{t}:\tilde{V}_{t}\rightarrow S$ over the scheme $\tilde{\delta}:S\rightarrow\mathrm{Spec}(B)$
obtained by gluing $r$ copies $S_{\overline{g}}$, $\overline{g}\in G/H$,
of $\mathrm{Spec}(B)\cong C\times\mathbb{A}^{1}$ by the identity
along the principal open subsets $\mathrm{Spec}(B_{x})\simeq C\times\mathrm{Spec}(k[x^{\pm1}])\subset S_{\overline{g}}$.
More precisely, $\tilde{\rho}_{t}:\tilde{V}_{t}\rightarrow S$ is
a Zariski locally trivial $\mathbb{A}^{1}$-bundle with local trivializations
$\tilde{V}_{t}\mid_{S_{\overline{g}}}\cong\mathrm{Spec}(B[u_{\overline{g}}])$
and transition isomorphisms over $S_{\overline{g}}\cap S_{\overline{g'}}\cong \mathrm{Spec}(B_x)$
of the form $u_{\overline{g}}\mapsto u_{\overline{g}'}=u_{\overline{g}}+x^{-m}(\sigma_{\overline{g}}(x)-\sigma_{\overline{g}'}(x))$.
The action of $G$ on $\tilde{V}_{t}$ descends to a fixed point free
action on $S$ defined locally by $S_{\overline{g}}\ni(c,x)\mapsto(g'\cdot c,x)\in S_{(\overline{g')^{-1}\cdot g}}$.
A geometric quotient for the action of $G$ on $S$ exists in the
category of algebraic spaces in the form of an \'etale $G$-torsor
$S\rightarrow\mathfrak{S}_{t}:=S/G$ and, by construction, we obtain
a cartesian square \[\xymatrix{\tilde{V}_t \ar[d]_{\tilde{\rho}_t} \ar[r] & V_t\cong \tilde{V}_t/G \ar[d]^{\rho_t} \\ S \ar[r] & \mathfrak{S}=S/G }\] 
where the horizontal morphisms are \'etale $G$-torsors. The induced
morphism $\rho_{t}:V_{t}\rightarrow\mathfrak{S}_{t}$ is thus an \'etale
locally trivial $\mathbb{A}^{1}$-bundle. To complete the proof, it
remains to observe that by construction, the $G$-invariant morphism
$\mathrm{pr}_{1}\circ\tilde{\delta}:S\rightarrow\mathrm{Spec}(B)\cong U_{t}\times_{C_{0}}C\rightarrow U_{t}$
descends to a morphism $\delta_{t}:\mathfrak{S}_{t}\rightarrow U_{t}$
restricting to an isomorphism outside $\{x=0\}\subset U_{t}$. Note
that on the other hand, $\delta_{t}^{-1}(\{x=0\})$ is isomorphic
to the quotient of $C\times G/H$ by the diagonal action of $G$,
hence to $C/H\cong C_{1}$. \end{proof}

\begin{rem}
The variety $X(s)$ admits an action of the additive group $\mathbb{G}_{a}$,
generated by the locally nilpotent derivation $\partial=x^{m}\partial_{y}-ry^{r-1}\partial_{z}$
of its coordinate ring. The line $L=\{x=y=t=0\}\subset X(s)$ is precisely
the fixed point locus of this action, and the $\mathbb{A}^{1}$-bundle
$\rho:X(s)\setminus L\rightarrow\mathfrak{S}$ constructed in Lemma
\ref{lem:quotient-space} coincides in fact with the geometric quotient
$X(s)\setminus L\rightarrow(X(s)\setminus L)/\mathbb{G}_{a}$ taken
in the category of algebraic spaces. In particular, $\rho:X(s)\setminus L\rightarrow\mathfrak{S}$
is an \'etale $\mathbb{G}_{a}$-torsor for every $s\geq1$, and so
are the bundles $p_{s}:W\rightarrow X(s)\setminus L$ and $p_{1}:W\rightarrow X(1)\setminus L$
deduced by taking fiber products over $\mathfrak{S}$. 
\end{rem}

The following example illustrates in the particular case of the Russell cubic $X=\{x^2z=y^2-t^3+x\}$ over an algebraically closed field $k$ an alternative construction of a weak-equivalence $X\setminus L \simeq \mathbb{A}^2\setminus \{0\}$.

\begin{example} As explained above, the morphism $q:X\setminus L \rightarrow \mathbb{A}^2\setminus \{0\}$ induced by the restriction of the projection $\mathrm{pr}_{x,t}$ factors through an \'etale $\mathbb{G}_a$-torsor $\rho:X\setminus L\rightarrow \mathfrak{S}$ over the algebraic space $\delta:\mathfrak{S}\rightarrow \mathbb{A}^2\setminus \{0\}$ obtained by replacing the punctured line $C_0=\{x=0\}\subset \mathbb{A}^2$ by its nontrivial \'etale double cover $h_{0}:C_{1}=\mathrm{Spec}(k[t^{\pm1}][y]/(y^{2}-t^{3}))\rightarrow C_{0}$. Now consider the smooth affine threefold $V$ in $\mathbb{A}^4=\mathrm{Spec}(k[x,t,u,v])$ defined by the equation $v^2t-x^2u=1$. The restriction to $V$ of the projection $\mathrm{pr}_{x,v}$ is a Zariski locally trivial $\mathbb{A}^1$-bundle $V\rightarrow \mathbb{A}^2\setminus \{0\}$, providing a weak-equivalence $V\simeq \mathbb{A}^2\setminus \{0\}$. On the other hand, a similar computation as in the proof of Lemma \ref{lem:quotient-space} shows that 
the restriction to $V$ of the projection $\mathrm{pr}_{x,t}$ factors through an \'etale locally trivial $\mathbb{A}^1$-bundle $\eta:V\rightarrow \mathfrak{S}$, in fact again an \'etale $\mathbb{G}_a$-torsor corresponding to the geometric quotient of the fixed point free $\mathbb{G}_a$-action on $V$ generated by the locally nilpotent derivation $\partial=2tv\partial_u-x^2\partial_v$ of its coordinate ring. 

The fiber product $W=(X\setminus L)\times_{\mathfrak{S}} V$ is thus simultaneously the total space of a $\mathbb{G}_a$-torsor over $X\setminus L$ and $V$ via the first and the second projections respectively. Furthermore, since $V$ is affine,  $\mathrm{pr}_V:W\rightarrow V$ is a trivial $\mathbb{G}_a$-torsor over $V$. One can check that via these isomorphisms, $X\setminus L$ coincides with the geometric quotient of the fixed point free $\mathbb{G}_a$-action on $W\cong V\times \mathrm{Spec}(k[w])$ generated by the locally nilpotent derivation $\tilde{\partial}=\partial+(t^{2}-\displaystyle{\frac{3}{2}xu^{2}})\partial_w$, the corresponding quotient map $V\times \mathbb{A}^1\rightarrow X\setminus L$ of schemes over $\mathbb{A}^2=\mathrm{Spec}(k[x,t])$ being given by 
\[(u,v)\mapsto (y,z)=(x^{2}w+t^{2}u-\frac{1}{2}xu^{3},x^{2}w^{2}+\frac{1}{4}u^{6}+2t^{2}uw-xu^{3}w+t^{3}v-x^{3}v^{2}+2xtuv^{2}).\]
\end{example} 

\subsection{The very weak five lemma}\label{sub:weak}

In this section, we prove a variant of the five lemma in the framework of pointed model categories. For these notions, we refer to \cite{Hovey99}.
Let then $\mathcal C$ be a pointed model category. For context, recall that given a commutative diagram
\[
\xymatrix{A\ar[r]^-u\ar[d]^-f  & B\ar[r]^-v\ar[d]^-g & C\ar[d]^-h \\
A^\prime\ar[r]_-{u^\prime} & B^\prime\ar[r]_-{v^\prime} & C^\prime}
\]
where the rows are cofiber sequences, then $h$ is a weak-equivalence provided $f$ and $g$ are. In general, one can not deduce that either $f$ or $g$ is a weak-equivalence provided the other two vertical morphisms are weak-equivalences. However, one can prove the following particular case.

\begin{lem}[Very weak five lemma]\label{lem:vwfl}
Let  \[ \xymatrix{A\ar[r]^-u\ar[d]^-f & B\ar[r]^-v\ar[d]^-g & C\ar[d]^-h \\ A^\prime\ar[r]_-{u^\prime} & B^\prime\ar[r]_-{v^\prime} & C^\prime} \] be a commutative diagram in a pointed model category $\mathcal C$ such that the rows are cofiber sequences. Suppose that $f$ and $h$ are weak-equivalences and that $B$ is contractible (i.e. the map to the final object is a weak-equivalence). Then $g$ is also a weak-equivalence and $B^\prime$ is contractible.
\end{lem}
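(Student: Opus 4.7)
Since $B$ is contractible, the two conclusions of the lemma are equivalent: applying two-out-of-three to the composition $B \xrightarrow{g} B' \to *$, whose total composite is the weak equivalence $B \to *$, shows that $g$ is a weak equivalence if and only if $B' \to *$ is, i.e., if and only if $B'$ is contractible. So the task reduces to proving $B' \simeq *$, which by Yoneda in $\mathrm{Ho}(\mathcal C)$ amounts to showing that $[B',Z]_{*} = \{*\}$ for every pointed object $Z$ of $\mathcal C$.

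To this end, I would extend both rows to full Puppe cofiber sequences
\[
A \xrightarrow{u} B \xrightarrow{v} C \xrightarrow{w} \Sigma A \to \Sigma B \to \cdots, \qquad A' \xrightarrow{u'} B' \xrightarrow{v'} C' \xrightarrow{w'} \Sigma A' \to \Sigma B' \to \cdots,
\]
compatibly via $f, g, h, \Sigma f, \Sigma g, \Sigma h, \ldots$, and apply the contravariant functor $[-,Z]_{*}$ to obtain two long exact sequences of pointed sets. The hypotheses provide two key inputs: $[B,Z]_{*} = \{*\}$ because $B$ is contractible, and $f^{*}$, $h^{*}$, $(\Sigma f)^{*}$ are bijections because $f$, $h$, and hence $\Sigma f$ are weak equivalences.

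The heart of the proof is a diagram chase. Pick any $\alpha \in [B',Z]_{*}$. Commutativity gives $f^{*}(u'^{*} \alpha) = u^{*}(g^{*} \alpha) \in [B,Z]_{*} = \{*\}$, so $u'^{*} \alpha = *$ by bijectivity of $f^{*}$, and exactness of the lower sequence at $[B',Z]_{*}$ produces $\beta \in [C',Z]_{*}$ with $v'^{*} \beta = \alpha$. Likewise $v^{*}(h^{*} \beta) = g^{*}(v'^{*} \beta) = g^{*} \alpha \in [B,Z]_{*} = \{*\}$, so by exactness of the upper sequence at $[C,Z]_{*}$ there is $\delta \in [\Sigma A, Z]_{*}$ with $w^{*} \delta = h^{*} \beta$; lift $\delta$ along the bijection $(\Sigma f)^{*}$ to $\delta' \in [\Sigma A',Z]_{*}$. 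Naturality of the Puppe connecting map, $w' \circ h \simeq \Sigma f \circ w$, then yields
\[
h^{*}\bigl((w')^{*} \delta'\bigr) = w^{*}\bigl((\Sigma f)^{*} \delta'\bigr) = w^{*} \delta = h^{*} \beta,
\]
so $(w')^{*} \delta' = \beta$ by bijectivity of $h^{*}$. Hence $\alpha = v'^{*} \beta = (w' \circ v')^{*} \delta' = *$, because $w' \circ v' \simeq 0$ as consecutive maps in a Puppe cofiber sequence. The main obstacle is simply carrying out this chase cleanly in the setting of pointed sets rather than groups, which is exactly why one needs full bijectivity---not merely triviality of kernels---of $f^{*}$, $h^{*}$, and $(\Sigma f)^{*}$ at two separate points.
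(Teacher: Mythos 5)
Your proof is correct and takes essentially the same route as the paper: the paper reduces to showing $[B',Z]=*$ for all $Z$, applies $[-,Z]$ to obtain long exact sequences of pointed sets, and concludes "by a simple diagram chase." Your write-up supplies the details of that chase (correctly using full bijectivity of $f^*$, $h^*$, $(\Sigma f)^*$ where pointed-set exactness alone would not suffice) together with the two-out-of-three reduction for $g$, so it is a faithful, fleshed-out version of the paper's argument.
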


\begin{proof} 
It suffices to show that $[B^\prime,X]=*$ for any object $X$ of $\mathcal C$. Applying the functor $[\_,X]$ for any object $X\in \mathcal C$ to the above diagram, we get a commutative diagram of long exact sequences (of pointed sets and groups). We conclude that $[B^\prime,X]=*$ by a simple diagram chase. 
\end{proof}

\subsection{The Milnor-Witt $K$-theory sheaf}\label{sub:MW}

Recall from \cite[\S 3.1]{Morel08} that given a field $F$, one can define the Milnor-Witt $K$-theory of $F$ denoted by $K_*^{\mathrm{MW}}(F)$, which is a $\Z$-graded ring with explicit generators and relations given in \cite[Definition 3.1]{Morel08}. The relevant features for us are that $K_0^{\mathrm{MW}}(F)=GW(F)$, the Grothendieck-Witt ring of symmetric bilinear forms (as usual, we denote $\langle a\rangle$ the class of the symmetric bilinear form $(x,y)\mapsto axy$ for $a\in F^\times$) and that $K_1^{\mathrm{MW}}(L)$ is generated by symbols $[a]$ with $a\in F^\times$. Given elements $a_1,\ldots,a_n\in F^\times$, we denote by $[a_1,\ldots,a_n]$ the $n$-fold product $[a_1]\cdot \ldots\cdot [a_n]$. 

Given a discrete valuation $v:L\to \Z$ with valuation ring $\mathcal O_v$, uniformizing parameter $\pi_v$ and residue field $k(v)$ there is a unique homomorphism $\partial_v:K^{\mathrm{MW}}_*(F)\to K^{\mathrm{MW}}_*(k(v))$ of degree $-1$ satisfying the formula $\partial_v([\pi_v,a_1,\ldots,a_n])=[\overline a_1,\ldots,\overline a_n]$ and $\partial_v([a_1,\ldots,a_n])=0$ for $a_i\in \OO_v^\times$ (\cite[\S 3.2]{Morel08}). The problem with this residue homomorphism is that it depends on the choice of the uniformizing parameter $\pi_v$. This led to considering \emph{twisted} Milnor-Witt $K$-theory groups as follows.  

Let $V$ be a rank one $F$-vector space and let $V^0$ be the set of nonzero elements in $V$. It has a transitive action of $F^\times$ and we can see the free abelian group $\Z[V^0]$ as a $\Z[F^\times]$-module. On the other hand, there is an action of $F^\times$ on $K_n^{\mathrm{MW}}(F)$ for any $n\in\Z$ by multiplication by the form $\langle a\rangle\in K_0^{\mathrm{MW}}(F)$, and thus the groups $K_n^{\mathrm{MW}}(F)$ are also $\Z[F^\times]$-modules. Set $K_n^{\mathrm{MW}}(F,V):=K_n^{\mathrm{MW}}(F)\otimes_{\Z[F^\times]}\Z[V^0]$. 

The residue homomorphism allows to define for any $n\in \Z$ a (Nisnevich) sheaf on the category of smooth $k$-schemes $\KMW_n$. If $X$ is a smooth scheme, this sheaf has an explicit flasque resolution whose term in degree $i$ is of the form
\[
\displaystyle{\bigoplus_{x\in X^{(i)}} \KMW_{n-i}(k(x),\wedge^i \mathfrak m_x/\mathfrak m_x^2) }
\]
where $\mathfrak m_x$ is the maximal ideal in $\OO_{X,x}$. The boundary homomorphism $d$, built on the residue map defined above, is described in \cite[Definition 5.11]{Morel08}. 

This sheaf is well-behaved if the base field $k$ is infinite perfect. For the needs of this paper, this means that $\KMW_n$ is a strictly $\A^1$-invariant sheaf in the sense of \cite[Definition 7]{Morel08}. This follows from \cite[Corollary 5.43, Theorem 5.38]{Morel08} and consequently there exists a space $K(\KMW_n,i)$ in the motivic homotopy category $\mathcal H(k)$ such that $[X,K(\KMW_n,i)]_{\aone}=H^i_{Nis}(X,\KMW_n)=H^i_{Zar}(X,\KMW_n)$. All the functoriality properties we use derive from this result.

Another useful fact (and indeed their fundamental feature) about Milnor-Witt $K$-theory sheaves is that they allow to describe the first nontrivial homotopy sheaf of the motivic spheres $\A^n\setminus \{0\}$ for $n\geq 2$ (\cite[Theorem 6.40]{Morel08}). This allows to understand the endomorphism ring $[\A^n\setminus \{0\},\A^n\setminus \{0\}]_{\aone}$ for any $n\geq 2$. Indeed, it follows from \cite[Corollary 6.43]{Morel08} that $[\A^n\setminus \{0\},\A^n\setminus \{0\}]_{\aone}=\KMW_0(k)$ if $n\geq 2$. If $f:\A^n\setminus \{0\}\to\A^n\setminus \{0\}$ is a morphism in $\mathcal H(k)$, the class of $f$ in the previous endomorphism ring is called the (motivic) Brouwer degree of $f$. There are several concrete ways to compute the Brouwer degree, and we will use the following method below. It follows from \cite[\S 3.3]{Fasel08b} (or \cite[Corollary 4.5]{Asok12b}) that 
\[
[\A^n\setminus \{0\}, K(\KMW_n,n-1)]_{\aone}=H^n(\A^n\setminus \{0\},\KMW_n)=\KMW_0(k)\cdot \xi
\]
where $\xi\in H^n(\A^n\setminus \{0\},\KMW_n)$ is an explicit generator. Now $[\A^n\setminus \{0\},\A^n\setminus \{0\}]_{\aone}\simeq H^{n-1}(\A^n\setminus \{0\},\KMW_n)$ by \cite[Corollary 4.4]{Asok12b}, the isomorphism being given by $f\mapsto f^*(\xi)$. The Brouwer degree of $f$ is therefore the element $\alpha(f)\in \KMW_0(k)$ such that $f^*(\xi)=\alpha(f)\cdot \xi$. We have thus obtained the following lemma.

\begin{lem}\label{lem:concreteBrouwer}
Let $f:\A^n\setminus \{0\}\to \A^n\setminus \{0\}$ be a morphism in $\mathcal H(k)$. Then $f$ is an isomorphism if and only if 
\[
f^*:H^{n-1}(\A^n\setminus \{0\},\KMW_n)\to H^{n-1}(\A^n\setminus \{0\},\KMW_n)
\] 
is an isomorphism.
\end{lem}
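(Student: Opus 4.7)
The plan is to read the equivalence directly off the identifications collected in the paragraph preceding the lemma. Set $\alpha : [\A^n\setminus\{0\}, \A^n\setminus\{0\}]_{\aone} \to \KMW_0(k)$ for the Brouwer degree, characterized by $f^*(\xi) = \alpha(f)\cdot \xi$. The first step is to observe that $\alpha$ is a ring isomorphism, where the source is endowed with its composition monoid structure and the target is $GW(k) = \KMW_0(k)$ with its usual product. Bijectivity of $\alpha$ is exactly what was recalled just before the statement; multiplicativity follows from contravariance, since $(g\circ f)^*\xi = f^*(\alpha(g)\cdot \xi) = \alpha(g)\alpha(f)\cdot \xi$, so that $\alpha(g\circ f) = \alpha(g)\alpha(f)$.

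As a consequence, $f$ is an isomorphism in $\mathcal H(k)$ (i.e.\ it admits a two-sided inverse in the endomorphism monoid) if and only if $\alpha(f)$ is a unit in $\KMW_0(k)$. On the cohomological side, $H^{n-1}(\A^n\setminus\{0\}, \KMW_n) = \KMW_0(k)\cdot \xi$ is a free $\KMW_0(k)$-module of rank one, and for every $u\in \KMW_0(k)$ the pullback satisfies $f^*(u\cdot \xi) = u\cdot f^*(\xi) = u\alpha(f)\cdot \xi$. Thus $f^*$ acts as multiplication by $\alpha(f)$ on a free rank-one module, so it is an isomorphism of abelian groups if and only if $\alpha(f)\in \KMW_0(k)^\times$. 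Chaining these two equivalences yields the lemma.

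Only a very modest obstacle arises in this proof: one must be careful that $\alpha$ is genuinely a \emph{ring} isomorphism, so that ``isomorphism in $\mathcal H(k)$'' correctly translates to ``unit in $\KMW_0(k)$''. Without this extra piece of structure, the statement would only give an equality of cardinalities between two sets of endomorphisms. The multiplicativity is however immediate from contravariance of pullback and the freeness of the target over $\KMW_0(k)$, so the argument reduces to a brief diagram chase building on Morel's computation of the endomorphism ring of the motivic sphere and Asok's cohomological description of $[\A^n\setminus\{0\}, \A^n\setminus\{0\}]_{\aone}$.
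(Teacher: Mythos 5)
Your proposal is correct and follows essentially the same route as the paper, which gives no separate proof but derives the lemma directly from the identifications $[\A^n\setminus\{0\},\A^n\setminus\{0\}]_{\aone}\simeq\KMW_0(k)$ and $f\mapsto f^*(\xi)$ recalled in the preceding paragraph. Your explicit verification that the Brouwer degree is multiplicative (so that invertibility in the endomorphism monoid matches being a unit in $\KMW_0(k)$) is a welcome detail that the paper leaves implicit.
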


\subsection{Some computations}\label{sub:computations}

The stage being set, we come back to our main purpose: the proof of Theorem \ref{thm:KRcontractible}. As explained in Section \ref{sec:main}, we have a commutative diagram of cofiber sequences
\begin{equation}\label{eqn:crucial}
\xymatrix{\A^2\setminus \{0\}\ar[r]\ar[d]_-i & \A^2\ar[d]\ar[r] & (\pone)^{\wedge 2}\ar[d] \\ X\setminus L\ar[r] & X\ar[r] & X/(X\setminus L).} 
\end{equation} 
Now $L=\A^1$, and we have a weak-equivalence $X/(X\setminus L)\simeq L_+\wedge (\pone)^{\wedge 2}$ by homotopy purity (\cite[\S 3.2, Theorem 2.23]{Morel99}). Moreover, the morphism $(\pone)^{\wedge 2}\to L_+\wedge (\pone)^{\wedge 2}$ is the morphism induced by the inclusion $\{0\}\subset \A^1=L$. It follows that the right-hand vertical morphism in the above diagram is a weak-equivalence. In view of the very weak five lemma, we are therefore reduced to prove that $i:\A^2\setminus \{0\}\to X\setminus L$ is a weak-equivalence. We know from Section \ref{sub:weak-equiv} that there is an explicit weak-equivalence $g:X\setminus L\to \A^2\setminus \{0\}$ and consequently $H^{1}(X\setminus L,\KMW_2)=\KMW_0(k)\cdot \mu$ for some generator $\mu$, say $\mu=g^*(\xi)$. In view of Lemma \ref{lem:concreteBrouwer}, we are reduced to prove that 
\[
i^*:H^{1}(X\setminus L,\KMW_2)\to H^1(\A^2\setminus \{0\},\KMW_2)
\]
is an isomorphism to conclude. To view this, consider the commutative diagram
\[ 
\xymatrix{H^1(X\setminus L,\KMW_2)\ar[r]^-\partial\ar[d]_-{i^*} & H^2((\pone)^{\wedge 2},\KMW_2)\ar[r]\ar@{=}[d] & H^2(X,\KMW_2)\ar[d]\ar[r] & H^2(X\setminus Z,\KMW_2)\ar[d]^-{i^*} \\
H^1(\A^2\setminus \{0\},\KMW_2)\ar[r]_-{\partial^\prime} & H^2((\pone)^{\wedge 2},\KMW_2)\ar[r] & H^2(\A^2,\KMW_2)\ar[r] & H^1(\A^2\setminus \{0\},\KMW_2)} 
\] 
associated to Diagram (\ref{eqn:crucial}). The last two terms in the bottom row are trivial, and the last term in the top row is also trivial by \cite[Lemma 4.5]{Asok12b}. Moreover, $\partial^\prime$ is an isomorphism and it follows that the left-hand $i^*$ is an isomorphism if and only if $\partial$ is an isomorphism. Since the two left-hand groups in the top row are free $\KMW_0(k)$-modules of rank one and $\partial$ is $\KMW_0$-linear, we reduced the proof of Theorem \ref{thm:KRcontractible} to the following assertion.

\begin{prop}\label{prop:twoways}
The connecting homomorphism $\partial:H^{1}(X\setminus L,\KMW_2)\to H^2((\pone)^{\wedge 2},\KMW_2)$ is surjective.
\end{prop}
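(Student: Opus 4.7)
The plan is to reduce the proposition to a concrete Brouwer degree computation. Since $\partial = \partial' \circ i^*$ with $\partial'$ an isomorphism, and both source and target of $\partial$ are free $\KMW_0(k)$-modules of rank one with $\partial$ being $\KMW_0(k)$-linear, surjectivity of $\partial$ is equivalent to $i^*$ being an isomorphism. Using the weak-equivalence $g : X \setminus L \simeq \A^2 \setminus \{0\}$ from Proposition \ref{prop:explicit}, the class $\mu := g^*(\xi)$ generates $H^1(X\setminus L, \KMW_2)$ and $i^*(\mu) = (g\circ i)^*(\xi)$; Lemma \ref{lem:concreteBrouwer} then reduces the statement to proving that the self-map $g\circ i : \A^2 \setminus \{0\} \to \A^2 \setminus \{0\}$ is an isomorphism in $\mathcal{H}(k)$, i.e., that its Brouwer degree lies in $\KMW_0(k)^\times$.

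To compute this Brouwer degree, I would make the zig-zag representing $g$ explicit as
\[ X(s) \setminus L \xleftarrow{p_s} W \xrightarrow{p_1} X(1) \setminus L \xrightarrow{\pi} \A^2 \setminus \{0\}, \]
where all three arrows are weak-equivalences and $\pi$ is the projection $(x,y,z) \mapsto (x,y)$ under the identification $X(1) \setminus L \cong (\A^2 \setminus \{0\}) \times \A^1$. Constructing a lift $\tilde i : \A^2 \setminus \{0\} \to W$ of $i$ along the Zariski-locally trivial $\A^1$-bundle $p_s$ amounts to producing a morphism $j : \A^2 \setminus \{0\} \to X(1) \setminus L$ with $\rho_1 \circ j = \rho_s \circ i$ as maps into the common base $\mathfrak{S}$ of the two $\A^1$-bundles $\rho_s$ and $\rho_1$; then $g \circ i$ is represented in $\mathcal{H}(k)$ by $\pi \circ j$.

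As a guiding special case, when $s = 1$ the analogous inclusion $i_1 : (y,t) \mapsto (-y^r - t, y, 0)$ composed with $\pi$ gives the polynomial automorphism $(y,t) \mapsto (-y^r - t, y)$ of $\A^2$, which fixes the origin and has Jacobian determinant $1$; its Brouwer degree is therefore $\langle 1 \rangle \in \KMW_0(k)^\times$. The main obstacle is to show that, for arbitrary $s \geq 2$, one can choose $j$ so that $\pi \circ j$ is $\A^1$-homotopic to a polynomial automorphism of $\A^2$ with unit Jacobian determinant. This requires carefully tracking the gluing construction of $\mathfrak{S}_t$ given in the proof of Lemma \ref{lem:quotient-space} and verifying compatibility with the transition functions of both $\A^1$-bundles. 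Once this is done, $g \circ i$ has Brouwer degree $\langle 1 \rangle$, and the proposition follows.
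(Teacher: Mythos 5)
Your opening reduction is correct but it only undoes the reduction the paper has already performed: the paper arrived at Proposition \ref{prop:twoways} precisely by converting ``$i$ is a weak-equivalence'' into ``$\partial$ is surjective'' via Lemma \ref{lem:concreteBrouwer} and the diagram with $\partial=\partial'\circ i^*$, so translating back to ``$g\circ i$ has unit Brouwer degree'' restates the problem rather than solving it. The substantive content of your plan --- constructing the lift $j$ and showing that $\pi\circ j$ is $\A^1$-homotopic to something whose degree is visibly a unit --- is exactly what you label ``the main obstacle'' and leave undone, so this is a genuine gap, not a proof. Two concrete difficulties stand in the way. First, the existence of $j$ as an actual morphism of schemes is not automatic: you need to lift $\rho_s\circ i:\A^2\setminus\{0\}\to\mathfrak{S}$ along the \'etale $\ga$-torsor $\rho_1:X(1)\setminus L\to\mathfrak{S}$, and the obstruction lives in a group of the form $H^1(\A^2\setminus\{0\},\OO)$, which is nonzero; over a non-affine base such a torsor need not admit a section, so you may be forced back into a zig-zag and lose the explicit polynomial model your Jacobian computation requires. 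Second, the answer cannot be as simple as ``unit Jacobian'': the composite $\delta\circ\rho_s\circ i$ is $(y,t)\mapsto(-y^r-t^s,t)$, which is generically $r$-to-$1$, and the paper's explicit computation shows that the degree is a unit only because a B\'ezout relation $gr-hs=1$ produces $g_\epsilon r_\epsilon-h_\epsilon s_\epsilon=\langle\pm1\rangle$; any correct version of your argument must make the hypothesis $\gcd(r,s)=1$ enter in an essential way, and your $s=1$ warm-up (where the map genuinely is a triangular automorphism) gives no indication of how that happens for $s\geq2$.

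For comparison, the paper proves the proposition by two different routes, neither of which passes through an explicit model of $g\circ i$: a ``lazy'' proof that uses exactness of the top row to reduce to $H^2(X,\KMW_2)=0$, which follows from the stable contractibility of $X$ established in \cite{Hoyois15} together with the $\pone$-suspension isomorphism for $\KMW$-cohomology; and an ``explicit'' proof that writes down a Milnor--Witt $1$-cocycle on $X\setminus L$, supported on $\{y=0\}$, $\{t=0\}$ and $\{x^{m-1}z=1\}$, whose boundary is the generator $\langle1\rangle\otimes\overline{t}\wedge\overline{y}$ of $H^2((\pone)^{\wedge2},\KMW_2)$ --- this is where the relation $gr-hs=1$ is used. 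If you want to pursue your geometric approach, you should first carry out the $\mathfrak{S}$-level computation in full for one nontrivial case (say the Russell cubic, using the Example in Section \ref{sub:weak-equiv}) to see whether $\pi\circ j$ can actually be realized and what its degree is.
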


\begin{proof}[Lazy proof]
As the first row in the above diagram is exact, it is sufficient to prove that $H^2(X,\KMW_2)=0$. Now, it follows from the projective bundle theorem in \cite{Fasel09d} that 
\[
H^i(X,\KMW_j)=H^{i+n}(X_+\wedge (\pone)^{\wedge n},\KMW_{j+n})
\]
for any $i,n\in \N$ and any $j\in \Z$. Moreover, we get a split cofiber sequence 
\[ 
(\pone)^{\wedge n}\to X_+\wedge (\pone)^{\wedge n} \to X\wedge (\pone)^{\wedge n} 
\]
from \cite[Proposition 2.2.4]{Asok14}. As $X\wedge (\pone)^{\wedge n}=*$ if $n$ is big enough by the main result of \cite{Hoyois15}, we find $H^i(X,\KMW_j)=H^{i+n}((\pone)^{\wedge n},\KMW_{j+n})$ for $i\geq 1$ and the latter is trivial.
\end{proof}

\begin{proof}[Explicit proof]
The generator of $H^2((\pone)^{\wedge 2},\KMW_2)$ is explicitly given by the class of the cocycle
\[
\langle 1\rangle\otimes \overline t\wedge \overline y\in \KMW_0(k(L),\wedge^2\mathfrak m_L/\mathfrak m_L^2)
\]
and we show that it is the boundary of a cocycle in $H^{1}(X\setminus L,\KMW_2)$. 

With this in mind, consider the (integral) subvarieties $M:=\{y=0\}\subset X$, $N:=\{t=0\}\subset X$ and $L^\prime:=\{y=t=0; x^{m-1}z=1 \}\subset X$. Observe that $M\cap N=L\coprod L^\prime$. The element $[y]\otimes \overline t\in \KMW_1(k(N),\mathfrak m_N/\mathfrak m_N^2)$ has non trivial boundary only on $L$ and $L^\prime$ and their values are respectively $\langle 1\rangle \otimes \overline t\wedge \overline y\in\KMW_0(k(L),\wedge^2(\mathfrak m_L/\mathfrak m_L^2))$ and $\langle 1\rangle \otimes \overline t\wedge \overline y\in\KMW_0(k(L^\prime),\wedge^2(\mathfrak m_{L^\prime}/\mathfrak m_{L^\prime}^2))$. It follows that $[y]\otimes \overline t\in \KMW_1(k(N),\mathfrak m_N/\mathfrak m_N^2)$ is not a cocycle on $X\setminus L$, and we now modify it to obtain an actual cocycle. 

The symbol $[x^{m-1}z-1]\otimes \overline y\in \KMW_1(k(M),\mathfrak m_M/\mathfrak m_M^2)$ has non trivial boundary only on $L^\prime$, which we now compute. As $x(x^{m-1}z-1)=y^r+t^s$ and $x\not \in \mathfrak m_{L^\prime}$, we find $(x^{m-1}z-1)=x^{-1}(y^r+t^s)\in \mathcal O_{\mathfrak m_{L^\prime}}$ and it follows that the boundary of $[x^{m-1}z-1]\otimes \overline y$ is the same as the boundary of $[x^{-1}t^s]\otimes\overline y$. As $[x^{-1}t^s]=[x^{-1}]+\langle x^{-1}\rangle [t^s]=[x^{-1}]+\langle x\rangle [t^s]$ and $[t^s]=s_{\epsilon}[t]$ (see \cite[Lemma 3.14]{Morel08} for the definition of $s_\epsilon$), we finally find that the boundary of $[x^{m-1}z-1]\otimes \overline y$ is $\langle x\rangle s_{\epsilon}\otimes \overline y\wedge \overline t=\langle -x\rangle s_{\epsilon}\otimes \overline t\wedge \overline y$.

Let $S:=\{x^{m-1}z=1\}\subset X$. As $S$ and $M$ are different codimension $1$ subvarieties, we find \[ d([y,x^{m-1}z-1])=[x^{m-1}z-1]\otimes \overline y+\epsilon [y]\otimes \overline {x^{m-1}z-1}. \] As $d^2=0$ and $\epsilon=-\langle -1\rangle$, we find  \[ d([y]\otimes \overline {x^{m-1}z-1})=\langle -1\rangle d([x^{m-1}z-1]\otimes \overline y)=\langle x\rangle s_{\epsilon}\otimes \overline t\wedge \overline y. \] Now, $x$ is a unit on $S$ and it follows that $d(\langle x\rangle[y]\otimes \overline {x^{m-1}z-1})=s_{\epsilon}\otimes \overline t\wedge \overline y$. A similar computation shows that $d([x^{m-1}z-1]\otimes \overline t)=\langle x\rangle r_\epsilon \otimes \overline t\wedge \overline y$ and it follows that  
\[ 
d([t]\otimes \overline {x^{m-1}z-1})=\langle -1\rangle d([x^{m-1}z-1]\otimes \overline t)=\langle -x\rangle r_{\epsilon}\otimes \overline t\wedge \overline y. 
\] 
Thus $d(\langle -x\rangle[t]\otimes \overline {x^{m-1}z-1})=r_{\epsilon}\otimes \overline t\wedge \overline y$. As $(r,s)=1$, we may suppose (switching $r$ and $s$ if necessary) that there exists $g,h\in \N$ such that $gr-hs=1$. For any integers $p,q$, we have $p_\epsilon q_\epsilon=(pq)_\epsilon$ and it follows that $g_\epsilon r_\epsilon -h_\epsilon s_\epsilon=\langle \pm 1\rangle$ (more precisely, it is $\langle 1\rangle$ if $gr$ is odd and $\langle -1\rangle$ otherwise).

In short, we see that $d(g_\epsilon \langle -x\rangle[t]\otimes \overline {(x^{m-1}z-1)}-h_\epsilon \langle x\rangle[y]\otimes \overline {(x^{m-1}z-1)})=\langle \pm 1\rangle \otimes \overline t\wedge \overline y$ and consequently \[ 
[y]\otimes \overline t-\langle \pm 1\rangle (g_\epsilon \langle -x\rangle[t]\otimes \overline {(x^{m-1}z-1)}-h_\epsilon \langle x\rangle[y]\otimes \overline {(x^{m-1}z-1)}) 
\] 
is a cocycle mapping to the generator of $H^2((\pone)^{\wedge 2},\KMW_2)$ under the boundary map $\partial:H^1(X\setminus Z,\KMW_2)\to H^2((\pone)^{\wedge 2},\KMW_2)$. 
\end{proof}


\bibliographystyle{alpha}
\bibliography{Russel,Russel-2}

\begin{thebibliography}{DMJP14}

\bibitem[AD07]{Asok07}
A.~Asok and B.~Doran.
\newblock On unipotents quotients and some {$\mathbb {A}^1$}-contractible
  smooth schemes.
\newblock {\em Int. {M}ath. {R}es. {P}ap.}, 2, 2007.

\bibitem[ADF14]{Asok14}
A.~Asok, B.~Doran, and J.~Fasel.
\newblock Smooth models of motivic spheres.
\newblock http://arxiv.org/abs/1408.0413, 2014.

\bibitem[AF14]{Asok12b}
A.~Asok and J.~Fasel.
\newblock Algebraic vector bundles on spheres.
\newblock {\em J. Topology}, 7(3):894--926, 2014.
\newblock doi:10.1112/jtopol/jtt046.

\bibitem[AM11]{AsokMorel}
A.~Asok and F.~Morel.
\newblock Smooth varieties up to {$\mathbb A^1$}-homotopy and algebraic
  $h$-cobordism.
\newblock {\em Adv. in Math.}, 227(5):1990--2058, 2011.

\bibitem[Asa87]{As87}
T.~Asanuma.
\newblock Polynomial fibre rings of algebras over {N}oetherian rings.
\newblock {\em Invent. Math.}, 87(1):101--127, 1987.

\bibitem[DF14]{DF14}
A.~Dubouloz and D.~R. Finston.
\newblock Proper twin-triangular {$\mathbb{G}_a$}-actions on {$\mathbb{A}^4$}
  are translations.
\newblock {\em Proc. Amer. Math. Soc.}, 142(5):1513--1526, 2014.

\bibitem[DMJP11]{DMJ11}
A.~Dubouloz, L.~Moser-Jauslin, and P.-M. Poloni.
\newblock Noncancellation for contractible affine threefolds.
\newblock {\em Proc. Amer. Math. Soc.}, 139(12):4273--4284, 2011.

\bibitem[DMJP14]{DMJ14}
A.~Dubouloz, L.~Moser-Jauslin, and P.-M. Poloni.
\newblock Automorphism groups of certain rational hypersurfaces in complex
  four-space.
\newblock In {\em Automorphisms in birational and affine geometry}, volume~79
  of {\em Springer Proc. Math. Stat.}, pages 301--312. Springer, Cham, 2014.

\bibitem[Fas11]{Fasel08b}
J.~Fasel.
\newblock Some remarks on orbit sets of unimodular rows.
\newblock {\em Comment. Math. Helv.}, 86(1):13--39, 2011.

\bibitem[Fas13]{Fasel09d}
J.~Fasel.
\newblock The projective bundle theorem for ${I}^j$-cohomology.
\newblock {\em J. K-Theory}, 11(2):413--464, 2013.

\bibitem[Gro58]{Gr58}
A.~Grothendieck.
\newblock Torsion homologique et sections rationnelles.
\newblock {\em S\'eminaire Claude Chevalley}, 3:1--29, 1958.

\bibitem[Gup14]{Gupta14}
N.~Gupta.
\newblock On the cancellation problem for the affine space {$\mathbb A^3$} in
  characteristic $p$.
\newblock {\em Invent. Math.}, 195(1):279--288, 2014.

\bibitem[HK{\O}15]{Hoyois15}
M.~Hoyois, A.~Krishna, and P.~A. {\O}stv{\ae}r.
\newblock {${\mathbb A}^1$}-contractibility of {K}oras-{R}ussel threefolds.
\newblock http://arxiv.org/abs/1409.1293, 2015.

\bibitem[Hov99]{Hovey99}
M.~Hovey.
\newblock {\em Model {C}ategories}, volume~63 of {\em Math. {S}urveys and
  {M}onographs}.
\newblock American Mathematical Society, Providence, RI, 1999.

\bibitem[Kal02]{Ka02}
S.~Kaliman.
\newblock Polynomials with general {$\bold C\sp 2$}-fibers are variables.
\newblock {\em Pacific J. Math.}, 203(1):161--190, 2002.

\bibitem[KML97]{KML97}
S.~Kaliman and L.~Makar-Limanov.
\newblock On the {R}ussell-{K}oras contractible threefolds.
\newblock {\em J. Algebraic Geom.}, 6(2):247--268, 1997.

\bibitem[KR97]{KR97}
M.~Koras and P.~Russell.
\newblock Contractible threefolds and {${\bf C}\sp *$}-actions on {${\bf C}\sp
  3$}.
\newblock {\em J. Algebraic Geom.}, 6(4):671--695, 1997.

\bibitem[KZ99]{KaZa99}
S.~Kaliman and M.~Zaidenberg.
\newblock Affine modifications and affine hypersurfaces with a very transitive
  automorphism group.
\newblock {\em Transform. Groups}, 4(1):53--95, 1999.

\bibitem[Mor12]{Morel08}
F.~Morel.
\newblock {\em $\mathbb {A}^1$-{A}lgebraic {T}opology over a {F}ield}, volume
  2052 of {\em Lecture Notes in Math.}
\newblock Springer, New York, 2012.

\bibitem[MV99]{Morel99}
F.~Morel and V.~Voevodsky.
\newblock {${\mathbb A}\sp 1$}-homotopy theory of schemes.
\newblock {\em Inst. Hautes \'Etudes Sci. Publ. Math.}, 90:45--143 (2001),
  1999.

\bibitem[Rus14]{Ru14}
P.~Russell.
\newblock Cancellation.
\newblock In {\em Automorphisms in birational and affine geometry}, volume~79
  of {\em Springer Proc. Math. Stat.}, pages 442--463. Springer, Cham, 2014.

\bibitem[Wic15]{Wickelgren15}
K.~Wickelgren.
\newblock Desuspensions of {$S^1\wedge {\mathbb P}^1_{\mathbb Q}\setminus
  \{0,1,\infty\}$}.
\newblock http://arxiv.org/abs/1502.07811, 2015.

\end{thebibliography}

\end{document}